\newtheorem{theorem}{Theorem}[section]
\newtheorem{lemma}[theorem]{Lemma}
\newtheorem{prop}[theorem]{Proposition}
\newtheorem*{con}{Conjecture}
\newtheorem{cor}[theorem]{Corollary}
\newtheorem*{hyp}{Hypothesis}
\DeclareMathOperator{\Res}{res}
\DeclareMathOperator{\Cor}{cor}
\newcommand{\Mod}{{\mathrm M}{\mathrm o}{\mathrm d}}
\newcommand{\End}{{\mathrm E}{\mathrm n}{\mathrm d}}
\newcommand{\Fun}{{\mathrm F}{\mathrm u}{\mathrm n}}
\newcommand{\Hom}{{\mathrm H}{\mathrm o}{\mathrm m}}
\newcommand{\PGL}{{\mathrm P}{\mathrm G}{\mathrm L}}
\newcommand{\GL}{{\mathrm G}{\mathrm L}}
\newcommand{\Irr}{{\mathrm I}{\mathrm r}{\mathrm r}}
\newcommand{\Coh}{{{\mathrm C}{\mathrm o}{\mathrm h}}}
\newcommand{\CG}{\Coh_G}
\newcommand{\CGc}{\Coh_{G^\chi}}
\newcommand{\Rep}{{\mathrm R}{\mathrm e}{\mathrm p}}
\newcommand{\sS}{\mathcal{S}}
\newcommand{\sV}{\mathcal{V}}
\newcommand{\sB}{\mathcal{B}}
\newcommand{\sD}{\mathcal{D}}
\newcommand{\sC}{\mathcal{C}}
\newcommand{\sG}{\mathcal{G}}
\newcommand{\sM}{\mathcal{M}}
\newcommand{\sN}{\mathcal{N}}
\newcommand{\mM}{\mathfrak{M}}
\newcommand{\mg}{\mathfrak{g}}
\newcommand{\bB}{{\mathbb B}}
\newcommand{\bC}{{\mathbb C}}
\newcommand{\bK}{{\mathbb K}}
\newcommand{\bF}{{\mathbb F}}
\newcommand{\bM}{{\mathbb M}}
\newcommand{\bZ}{{\mathbb Z}}
\newcommand{\bBM}{{\mathbb B}^\mu}
\newcommand{\bP}{{\mathbb B}^\Phi}
\newcommand{\fg}{{\mathfrak{g}}}
\newenvironment{mylist}{\begin{list}{}{
\setlength{\itemsep}{0mm}
\setlength{\parskip}{0mm}
\setlength{\topsep}{1mm}
\setlength{\parsep}{0mm}
\setlength{\itemsep}{0mm}
\setlength{\labelwidth}{6mm}
\setlength{\labelsep}{3mm}
\setlength{\itemindent}{0mm}
\setlength{\leftmargin}{9mm}
\setlength{\listparindent}{6mm}
}}{\end{list}}
\begin{document}

%\begin{frontmatter}

%\selectlanguage{english}

%\title{The extended Burnside ring and module categories}
%\title{On extended Burnside rings in representation theory}
\title{Generalised Burnside Rings, $G$-categories and Module Categories}
\author{Paul Gunnells}
\address{Department of Mathematics and Statistics, University of Massachusetts, Amherst, MA 01003, USA}
\email{gunnells@math.umass.edu}
\author{Andrew Rose}
\address{Mathematics Department, University of Warwick, Coventry, CV4 7AL, UK}
\email{Andrew1145@gmail.com}
\author{Dmitriy Rumynin}
\address{Department of Mathematics, University of Warwick, Coventry, CV4 7AL, UK}
\email{D.Rumynin@warwick.ac.uk}
\thanks{The second author was supported by the EPSRC. 
The third author would like to thank
Isaac Newton Institute of Mathematical Sciences where this research has been started
and Max Planck Institute in Bonn where it has been completed.}

\date{December 1, 2011}
\subjclass{Primary  19A22; Secondary 20F55}
\keywords{Burnside ring, module category, table of marks, Kazhdan-Lusztig cells}

\begin{abstract}
This note describes an application of the theory of generalised Burnside rings to algebraic representation theory.  
%It is shown that a ring generated by classes of semisimple module categories over $Rep(G)$ with quasifibre functors 
%is isomorphic to the generalised Burnside ring. 
Tables of marks are given explicitly for the groups $S_4$ and $S_5$ 
which are of particular interest in the context of reductive algebraic groups. 
As an application, the base sets for the nilpotent element  $F_4 (a_3)$ are computed.
\end{abstract}

\maketitle

Our aim is to combine two modern lines of enquiry. 
The first line is generalised Burnside rings which were recently introduced by 
Hartmann and Yal\c{c}in \cite{HartmannYalcin07}.
The second line is the study of tensor categories attached to cells in affine Weyl groups
by
Bezrukavnikov, Finkelberg and Ostrik \cite{Bezruk01,BezrukFO06}. We show how one can use generalised Burnside rings to carry through explicit 
calculations with module categories.
%We also comment on the relation to two other modern developments in Burnside rings: Burnside rings of fusion systems by D\'{i}az and Libman
%\cite{DiLi} 
%and utilisation of Burnside rings in the proof of a partial case of the parity conjecture by Dokchister brothers
%\cite{DoDo}.

The note is organised as follows. 
In section 1 we introduce generalised Burnside rings. 
Our generalised Burnside ring is slightly more general 
than the one of Hartmann and Yal\c{c}in.
We define it for a general functor rather than the cohomology functor.
%and it is conveniently formulated in the language of fusion systems.
%We use different conditions on the fusion system than D\'{i}az and Libman
%\cite{DiLi} because we have different examples in mind.
For our applications, the most crucial functor 
is the Schur multiplier $\mu (G)$, so 
we describe the table of marks for the Schur multiplier for the symmetric groups
$S_4$ and $S_5$. 
In section 2 we 
discuss the connection between $\mu$-decorated sets and $G$-algebras.
In section 3 we 
discuss the connection between $\mu$-decorated sets and groupoids.
In section 4 we study module categories in the spirit of Bezrukavnikov and Ostrik \cite{Bezruk01}.
%{\bf cf Bezrukavnikov-Ostrik}
%Section 3 contains a speculation how Schur-decorated sets can be useful in number theory.
In section 5 
we investigate base sets of Kazhdan-Lusztig cells \cite{Lus}.
We use a computer calculation with Kazhdan-Lusztig polynomials and a pen-and-paper calculation
in the Burnside ring of $S_4$ to determine the base set of the largest finite double cell
in the affine Weyl group of the type $F_4$.  
%explains how Schur-decorated sets can be used to compute with Kazhdan-Lusztig cells.
In the final section 6 we explain 
an application to representation theory
of the reduced enveloping algebra $U_\chi (\fg)$ where $\fg$ is of the type $F_4$
and $\chi$ is of the type $F_4(a_3)$.

%The authors are greatly indebted to Paul Gunnels
%for his interest in the project and providing computational evidence for the cells in $F_4$.
The authors would like to thank  
M. Belolipetsky, M. Finkelberg, 
S. Goodwin, J. Humphreys, G. Rohrle
and W. Soergel
for stimulating discussions.
%and Maria Ferreira for her help in producing the French abridged version
The second author was supported by the EPSRC. 
The third author would like to thank
Isaac Newton Institute of Mathematical Sciences where this research has been started
and Max Planck Institute in Bonn where it has been completed.

\section{Generalised Burnside ring}
\label{s:intro}

Let $G$ be a finite group, $\sS (G)$ its category of subgroups.
Objects of $\sS (G)$ are subgroups of $G$. 
The morphisms $\sS(A,B)$ are conjugations $\gamma_x: A \rightarrow B$, 
$\gamma_x (a) = xax^{-1}$, $x\in G$ whenever $xAx^{-1}\subseteq B$, restricted to $A$.
Thus, $\gamma_x$ and $\gamma_y$ define the same morphism in $\sS(A,B)$
whenever $y^{-1}x$ is in the centraliser of $A$.
The composition of morphisms is the composition of homomorphisms.

{\em A generalised Burnside ring} $\bP_R (G)$ %or simply $\bP_R$ when $G$ and $H$ are clear from the context
depends on  a contravariant functor $\Phi$  from $\sS(G)$ to the category of semigroups
and  a commutative ring of coefficients $R$.
As an $R$-module it is generated by disjoint union of all $\Phi (A)$, $A\in\sS(G)$. 
We write $\langle a,A\rangle $ for an element of the semigroup $a\in\Phi (A)$.
The $R$-module generators satisfy the relations
$$
\langle a,A\rangle  = \langle \Phi(\gamma_g)(a), g^{-1}Ag \rangle 
%\mbox{ and }
%\langle a,H\rangle  + \langle b, H\rangle  = \langle ab,H\rangle 
$$
for all $g\in G$, $A\in \sS(G)$, $a\in \Phi (A)$. 
Notice that $\langle a,A\rangle  + \langle b, A\rangle  \neq \langle ab,A\rangle $ in general (we think of semigroups as multiplicative semigroups). 
The multiplication is $R$-bilinear, defined on the $R$-module generators by the formula
$$
\langle a,A\rangle  \cdot \langle b, B\rangle  = \sum_{AxB\in A\backslash G/B} 
\langle 
\Phi(\gamma_1: A\cap xBx^{-1} \rightarrow A)(a) 
\Phi(\gamma_{x^{-1}} : A\cap xBx^{-1} \rightarrow B)(b)
,A\cap xBx^{-1}\rangle .
$$
%t is an easy exercise to verify that 
\begin{lemma}
Defined as above, $\bP_R (G)$ is an associative $R$-algebra. 
If $\Phi$ is a functor to monoids then $\bP_R (G)$ is unitary.
\end{lemma}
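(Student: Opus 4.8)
The plan is to check, in order, that the product is well-defined on $\bP_R(G)$, that it is associative, and --- when $\Phi$ takes values in monoids --- that $\langle 1_G,G\rangle$, with $1_G$ the unit of the monoid $\Phi(G)$, is a two-sided identity; together with the $R$-bilinearity built into the definition this yields the statement. Two properties of $\Phi$ will be used throughout: contravariant functoriality, so that $\Phi(\gamma_x)\Phi(\gamma_y)=\Phi(\gamma_y\gamma_x)$ while $\gamma_y\gamma_x=\gamma_{yx}$ for composable conjugations; and that $\Phi$ of any morphism is a homomorphism of semigroups, hence may be moved past the products appearing inside the brackets $\langle\,\cdot\,,\cdot\,\rangle$.

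For well-definedness one must see that the right-hand side of the product formula is unchanged if a double coset representative $x$ is replaced by $x'=axb$ with $a\in A$, $b\in B$, and that it is compatible with the defining relations of $\bP_R(G)$ applied to $\langle a,A\rangle$ and to $\langle b,B\rangle$ separately. In the first case $D=A\cap xBx^{-1}$ is replaced by $D'=aDa^{-1}$; applying the relation of $\bP_R(G)$ with $g=a$ rewrites the $x'$-term as a bracket over $D$, and a diagram chase --- writing the restricted conjugation $\gamma_{x'^{-1}}|_D$ as $\gamma_{b^{-1}}\gamma_{x^{-1}}\gamma_a$, and factoring the inclusion $D\hookrightarrow A$ through $D'\hookrightarrow A$ --- reduces the discrepancy between the two terms to $\Phi$ applied to conjugations by $a$ and $b$ regarded as endomorphisms of $A$ and $B$; since these are inner, they are absorbed on passing to $\bP_R(G)$, so the two terms agree. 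Compatibility with the relations in the factors is entirely parallel. This step is routine but requires care with the numerous composition identities among conjugation morphisms.

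Associativity is the substantive point. Expanding $(\langle a,A\rangle\langle b,B\rangle)\langle c,C\rangle$, composing the ensuing chains of inclusions and conjugations by functoriality and distributing $\Phi$ over the inner products, one obtains
$$
(\langle a,A\rangle\langle b,B\rangle)\langle c,C\rangle=\sum_{x,\,y}\bigl\langle\,\Phi(\gamma_1)(a)\cdot\Phi(\gamma_{x^{-1}})(b)\cdot\Phi(\gamma_{y^{-1}})(c)\,,\ A\cap xBx^{-1}\cap yCy^{-1}\bigr\rangle ,
$$
the sum over $x\in A\backslash G/B$ and $y\in(A\cap xBx^{-1})\backslash G/C$, where $\gamma_1$ is the relevant inclusion and the conjugations are restricted to the displayed subgroup; symmetrically,
$$
\langle a,A\rangle(\langle b,B\rangle\langle c,C\rangle)=\sum_{z,\,w}\bigl\langle\,\Phi(\gamma_1)(a)\cdot\Phi(\gamma_{w^{-1}})(b)\cdot\Phi(\gamma_{(wz)^{-1}})(c)\,,\ A\cap wBw^{-1}\cap wzCz^{-1}w^{-1}\bigr\rangle ,
$$
summed over $z\in B\backslash G/C$ and $w\in A\backslash G/(B\cap zCz^{-1})$. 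Both index sets are canonically in bijection with the set of $A$-orbits on $G/B\times G/C$ --- the first via $(x,y)\mapsto A\cdot(xB,yC)$, the second via $(z,w)\mapsto A\cdot(wB,wzC)$ --- and under the induced bijection one may take $x=w$ and $y=wz$, whereupon the two subgroups coincide and the three decoration factors match one by one. Hence the two bracketings are equal.

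The main obstacle is precisely this bookkeeping: verifying that the two double-coset index sets biject as claimed in a way independent of all representative choices, and that the decorations transported along the two different chains of $\Phi$-maps genuinely coincide --- here contravariant functoriality applied to $\gamma_y\gamma_x=\gamma_{yx}$ is indispensable. Finally, for the unit: in $\langle 1_G,G\rangle\cdot\langle b,B\rangle$ the set $G\backslash G/B$ consists of a single double coset with representative $1$ and associated subgroup $B$, the two maps being the inclusion $B\hookrightarrow G$ and $\mathrm{id}_B$; as $\Phi$ of a morphism is a monoid homomorphism, $\Phi(B\hookrightarrow G)(1_G)=1_B$, so the term equals $\langle 1_B\cdot b,B\rangle=\langle b,B\rangle$, and the mirror computation gives $\langle b,B\rangle\cdot\langle 1_G,G\rangle=\langle b,B\rangle$; thus $\langle 1_G,G\rangle$ is a two-sided identity.
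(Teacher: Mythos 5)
Your associativity and unit computations are fine, and they follow a genuinely different route from the paper: the paper proves the lemma by reinterpreting $\bP_R(G)$ as the Grothendieck group of $\Phi$-decorated finite $G$-sets (sum $=$ disjoint union, product $=$ direct product with frills multiplied after restriction to $G_x\cap G_y$), so that associativity and the unit $\langle 1,G\rangle$ come essentially for free once the double-coset formula is recognised as the Mackey decomposition of $G/A\times G/B$; you instead expand both bracketings directly and match the two index sets via $A$-orbits on $G/B\times G/C$, which I checked and is correct, as is the unit argument. The problem is the well-definedness step, and it is a genuine gap, not a bookkeeping nuisance. Carrying out your own reduction: with decorations $\alpha\in\Phi(A)$, $\beta\in\Phi(B)$, $D=A\cap xBx^{-1}$ and $x'=axb$, applying the defining relation with $g=a$ turns the $x'$-term into $\langle\,\Phi(\gamma_1 : D\to A)\bigl(\Phi(\gamma_a : A\to A)(\alpha)\bigr)\cdot\Phi(\gamma_{x^{-1}} : D\to B)\bigl(\Phi(\gamma_{b^{-1}} : B\to B)(\beta)\bigr),\,D\,\rangle$, i.e.\ exactly what you say: the two terms sit over the \emph{same} subgroup $D$ and differ by inner twists of $\alpha$ and $\beta$ applied before restriction. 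But the relations of $\bP_R(G)$ only allow you to act on an entire generator $\langle c,D\rangle$ by a single $\Phi(\gamma_g)$ while replacing $D$ by $g^{-1}Dg$; they give no way to twist the two factors of the decoration separately, so ``absorbed on passing to $\bP_R(G)$'' is an assertion, not a proof. It is also not repairable for a completely arbitrary contravariant $\Phi$ on $\sS(G)$: take $G=S_3$, $\Phi(G)=G$ with $\Phi(\gamma_g)$ acting by conjugation and $\Phi(H)$ trivial for proper $H$; then $\langle (12),G\rangle\cdot\langle (12),G\rangle$ computed with the representative $1$ gives the class of the identity, while the representative $(13)$ gives the class of a $3$-cycle, so the formula genuinely depends on the representative.

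What rescues the lemma in every example the paper actually uses (group cohomology, the Schur multiplier, representation rings, Burnside rings) is the additional property that inner automorphisms act trivially, i.e.\ $\Phi(\gamma_a : A\to A)=\mathrm{id}_{\Phi(A)}$ for all $a\in A$; granted this, your two terms are literally equal, and the parallel check for compatibility with the defining relations (which hits the same issue when $g$ lies in $A$ or $B$) also closes. You should either state and use this hypothesis explicitly or verify it for the functors at hand. It is worth noting that the paper's own decorated-set argument needs the same property implicitly: the frill assignment $\pi_{gA}=\Phi(\gamma_g)(\alpha)$ on $G/A$ is only consistent under it, so your direct approach has the virtue of exposing where the hypothesis is really used, whereas the decorated-set picture hides it (along with the identification of the double-coset formula with the product of decorated sets) but then delivers associativity and the unit with no further work.
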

%{\bf Proof.}
\begin{proof}
%It is  commutative if and only the semigroups $\Phi (A)$ are commutative.
A sleek way to prove this is to interpret $\bP_R(G)$ 
as a Grothendieck group of $\Phi$-decorated $G$-sets. 
A $\Phi$-decorated $G$-set is a finite set $X$ 
with a $G$-action and {\em a frill} 
$\pi_x \in \Phi (G_x)$ attached to each point $x\in X$. 
Here $G_x$ is
the stabiliser of $x$ in $G$.
The frills $\pi_x$ must be equivariant in a sense that 
$\pi_{gx} = \Phi (\gamma_g) (\pi_{x})$.

The element  $\langle a,A\rangle $ represents a homogeneous set $G/A$ with frills $\pi_{gA} = \Phi (\gamma_g) (a)$.
The addition corresponds to disjoint union $[X]+[Y]=[X\coprod Y]$ and 
the multiplication corresponds to the direct product  
$[X]\cdot[Y]=[X\times Y]$,
where the frills multiplied in the corresponding semigroup (note that $G_{(x,y)}= G_x\cap G_y$):
$$
\pi_{(x,y)} = 
\Phi(\gamma_1 : G_{(x,y)} \rightarrow G_x)(\pi_x) 
\Phi(\gamma_1 : G_{(x,y)} \rightarrow G_y)(\pi_y). 
$$ 

If $\Phi$ is a functor to monoids, then $\langle 1,G\rangle$ 
is the identity of  $\bP_R(G)$ as can be easily verified.
%$\Box$
\end{proof}

The subgroup category $\sS (G)$ is an example of {\em a fusion system}.
Burnside rings of fusion systems were constructed by Diaz and Libman \cite{DiLi}.
Generalised Burnside rings can be extended to fusion systems as well.
An interested reader is invited to follow this lead, especially if the reader
can think of useful applications.

The notion of {\em a mark homomorphism}
can be extended to generalised Burnside rings (cf. \cite[\S 6]{HartmannYalcin07}). 
Let $S$ be an associative $R$-algebra, 
$\alpha : \Phi (A) \rightarrow S^\times$ 
a semigroup homomorphism for some $A\in\sS(G)$. The corresponding mark is an $R$-linear map
$f_A^\alpha : \bP_R(G) \rightarrow S$ given by the formula
\begin{eqnarray}\label{eqn:hom}
f_A^\alpha (\langle b, B \rangle)&=&\frac{1}{|B|}\sum_{g\in X}
%\alpha(\Res_{K^g,A}(\mu^g)),
\alpha (
\Phi(\gamma_{g} : A \rightarrow B)(b) 
)
\end{eqnarray}
where $X=\{g\in G\mid gAg^{-1}\subseteq B \}$. %, $K\leq G$ and $\mu\in M(K)$.

\begin{lemma}\label{lem:f_i}
The mark $f_A^\alpha$ is an $R$-algebra homomorphism. It is unitary if $\Phi$ is a functor to monoids and $\alpha$ is unitary.
\end{lemma}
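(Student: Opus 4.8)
The plan is to deduce the lemma from three facts about formula~\eqref{eqn:hom}: that it respects the defining relations of $\bP_R(G)$ (so that $f_A^\alpha$ is a well-defined $R$-linear map), that it is multiplicative on the $R$-module generators $\langle b,B\rangle$, and that it carries $\langle 1,G\rangle$ to $1$ under the stated hypotheses. $R$-linearity will be automatic once well-definedness is known, since $\bP_R(G)$ is generated as an $R$-module by the $\langle b,B\rangle$ subject only to those relations and \eqref{eqn:hom} is visibly $R$-linear on generators. For well-definedness I would start from $f_A^\alpha(\langle\Phi(\gamma_h)(b),\,h^{-1}Bh\rangle)$, use the contravariance of $\Phi$ together with the composition identity $\gamma_h\circ\gamma_g=\gamma_{hg}$ of subgroup morphisms to rewrite the generic summand $\alpha(\Phi(\gamma_g\colon A\to h^{-1}Bh)(\Phi(\gamma_h)(b)))$ as $\alpha(\Phi(\gamma_{hg}\colon A\to B)(b))$, and then reindex $g\mapsto hg$; this reindexing carries $\{g\mid gAg^{-1}\subseteq h^{-1}Bh\}$ bijectively onto $\{g\mid gAg^{-1}\subseteq B\}$, and since $|h^{-1}Bh|=|B|$ we land back on $f_A^\alpha(\langle b,B\rangle)$. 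It is worth keeping in mind the picture behind \eqref{eqn:hom}, in the spirit of the preceding lemma: on a $\Phi$-decorated $G$-set $X$ the mark should read as a \emph{weighted fixed-point count} $f_A^\alpha([X])=\sum_{x\in X^A}\alpha(\Phi(\gamma_1\colon A\to G_x)(\pi_x))$, where $X^A=\{x\mid A\subseteq G_x\}$, the factor $1/|B|$ in \eqref{eqn:hom} being exactly the passage from the index set $\{g\mid gAg^{-1}\subseteq B\}$ to the fixed-point set $(G/B)^A$.

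For multiplicativity I would expand $f_A^\alpha(\langle a_1,A_1\rangle\cdot\langle a_2,A_2\rangle)$ using the multiplication rule of $\bP_R(G)$: it is the sum over $A_1xA_2\in A_1\backslash G/A_2$ of $f_A^\alpha(\langle c_x,D_x\rangle)$, with $D_x=A_1\cap xA_2x^{-1}$ and $c_x$ the product frill. Inside the term indexed by $x$, because $\alpha$ and each $\Phi(\gamma_\bullet)$ are semigroup homomorphisms and the relevant composites of subgroup morphisms collapse --- $A\xrightarrow{\gamma_g}D_x\xrightarrow{\gamma_1}A_1$ is $\gamma_g\colon A\to A_1$ and $A\xrightarrow{\gamma_g}D_x\xrightarrow{\gamma_{x^{-1}}}A_2$ is $\gamma_{x^{-1}g}\colon A\to A_2$ --- the $g$-summand factors as $\alpha(\Phi(\gamma_g\colon A\to A_1)(a_1))\cdot\alpha(\Phi(\gamma_{x^{-1}g}\colon A\to A_2)(a_2))$, the sum being over $g$ with $gAg^{-1}\subseteq D_x$; hence this inner sum equals $|D_x|\,f_A^\alpha(\langle c_x,D_x\rangle)$. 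On the other side, $f_A^\alpha(\langle a_1,A_1\rangle)\cdot f_A^\alpha(\langle a_2,A_2\rangle)$ is, after distributing in $S$, the quantity $\tfrac{1}{|A_1||A_2|}\sum_{(p,q)}\alpha(\Phi(\gamma_p\colon A\to A_1)(a_1))\cdot\alpha(\Phi(\gamma_q\colon A\to A_2)(a_2))$ summed over all $(p,q)$ with $pAp^{-1}\subseteq A_1$ and $qAq^{-1}\subseteq A_2$. I would then compare the two via the weight-preserving bijection $(p,q)\mapsto(x,g)=(pq^{-1},p)$ from these pairs onto the pairs $(x,g)$ with $gAg^{-1}\subseteq A_1\cap xA_2x^{-1}$ (now $x$ unrestricted): grouping the latter by double cosets, using that the inner sum over $g$ is constant on each $A_1xA_2$ (because $|D_x|$ is and, by the preceding lemma, $\langle c_x,D_x\rangle$ depends only on the double coset) and that $|A_1xA_2|=|A_1||A_2|/|D_x|$, the normalisations cancel and the two sums coincide. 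Commutativity of $S$ is never needed --- only left and right distributivity, keeping the $A_1$-factors on the left throughout. Phrased on decorated $G$-sets, this step is just the identity $(X\times Y)^A=X^A\times Y^A$ (valid because $G_{(x,y)}=G_x\cap G_y$) together with the observation that restricting the product frill along $A\hookrightarrow G_{(x,y)}\hookrightarrow G_x$ collapses, by functoriality, to restriction along $A\hookrightarrow G_x$; distributivity in $S$ then yields $f_A^\alpha([X\times Y])=f_A^\alpha([X])\,f_A^\alpha([Y])$.

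For the unital part, when $\Phi$ takes values in monoids and $\alpha$ is unitary, $\langle 1,G\rangle$ is the one-point $G$-set carrying the unit frill; its unique point is $A$-fixed, that frill restricts along $A\hookrightarrow G$ to $1\in\Phi(A)$ because $\Phi(\gamma_1\colon A\to G)$ is then a monoid homomorphism, and $\alpha(1)=1$, so $f_A^\alpha(\langle 1,G\rangle)=1$. I expect the real difficulty to be bookkeeping rather than ideas: keeping the contravariance of $\Phi$ consistent when composing morphisms of subgroups, so that identities of the form ``restriction along $A\hookrightarrow D_x\hookrightarrow A_1$ equals restriction along $A\hookrightarrow A_1$'' are applied with the correct handedness, and making the normalising constants $1/|B|$, $1/|A_1|$, $1/|A_2|$ fall out correctly from the coset and double-coset counts. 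This is where the direct computation is most error-prone, and reading \eqref{eqn:hom} as a decorated fixed-point count (fixed points in place of cosets) is the cleanest hedge against slips.
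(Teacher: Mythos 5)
Your proof is correct, and at its core it is the paper's own argument: the paper proves the lemma precisely by rereading $f_A^\alpha$ as the weighted fixed-point count $\sum_{x\in X^A}\alpha(\Phi(\gamma_1:A\to G_x)(\pi_x))$ on $\Phi$-decorated sets and then using $(X\times Y)^A=X^A\times Y^A$ together with the fact that $\alpha$ and the restriction maps are semigroup homomorphisms, exactly as in your closing reformulation; the unitary case is handled identically ($\alpha(\Phi(\gamma_1)(1))=1_S$). What you add beyond the paper is twofold: an explicit check that the formula~(\ref{eqn:hom}) respects the defining relations $\langle b,B\rangle=\langle\Phi(\gamma_h)(b),h^{-1}Bh\rangle$ (the paper leaves this implicit, it being automatic once the mark is expressed through the isomorphism-invariant fixed-point formula), and a direct coset-level verification of multiplicativity via the bijection $(p,q)\mapsto(pq^{-1},p)$, grouping by double cosets and using $|A_1xA_2|=|A_1||A_2|/|D_x|$ so that the normalisations $1/|B|$ cancel. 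That direct computation is sound (your handling of contravariance, $\gamma_{x^{-1}}\circ\gamma_g=\gamma_{x^{-1}g}$, and the constancy of the inner sum on each double coset are all right) and it buys an explicit audit of the normalising factors, which is exactly the bookkeeping the paper's decorated-set reformulation absorbs silently; the paper's route buys brevity and makes well-definedness and multiplicativity transparent at the price of trusting the translation between the coset sum and the fixed-point sum. No gaps.
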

%{\bf Proof.}
\begin{proof} 
Let us reinterpret the mark using $\Phi$-decorated sets.
The condition $gAg^{-1}\subseteq B$ means that $Ag^{-1}B=g^{-1}B$,
i.e., $A$ lies in the stabiliser of $g^{-1}B$. The frill of $X$ with $[X]=<b,B>$ at $g^{-1}B$ 
is $\Phi (\gamma_g)(b)$. Thus, on the level of decorated sets,
\begin{eqnarray}\label{eqn:hom1}
f_A^\alpha ( [X,\pi_x ]) &=&
\sum_{x\in X^A}
\alpha (
\Phi(\gamma_{1} : A \rightarrow G_x)(\pi_x) 
)
\end{eqnarray}
and, consequently,
$$
f_A^\alpha
%(\langle d,D\rangle  \cdot \langle b, B\rangle)  
( [(X,\pi_x)\times (Y,\psi_y) ])
= 
%\sum_{gAg^{-1} \subseteq  D\cap xBx^{-1}} 
\sum_{(x,y)\in (X\times Y)^A} %gAg^{-1} \subseteq  D\cap xBx^{-1}} 
%\frac{1}{ |D\cap xBx^{-1}|}
\alpha \Big(
\Phi(\gamma_1: A \rightarrow G_x)(\pi_x)
\Phi(\gamma_1: A \rightarrow G_y)(\psi_y)
\Big)
=
$$
$$
\sum_{x\in X^A} \sum_{y\in Y^A}
\Big( \alpha (
\Phi(\gamma_1: A \rightarrow G_x)(\pi_x))
\Big)
\Big( 
\alpha (\Phi(\gamma_1: A \rightarrow G_y)(\psi_y))
\Big)
=
f_A^\alpha
(X,\pi_x)
f_A^\alpha
(Y,\psi_y)
$$

In the unitary case, the identity of $\bP_R(G)$ is $\langle 1,G\rangle$ and
$
f_A^\alpha (\langle 1, G \rangle) =
\alpha (\Phi(\gamma_1)(1)) = \alpha (1_{\Phi (A)}) = 1_S
%\frac{1}{|G|}\sum_{g\in G} \alpha (\Phi(\gamma_g)(1)) = 
%\frac{1}{|G|}\sum_{G} \alpha (1) = 1_S
.$
\end{proof}

Note that if $\Phi(A)$ is a finite abelian group there is an isomorphism between the group of linear characters of $\Phi(A)$ and the group $\Phi(A)$.
If all $\Phi (A)$ are finite abelian groups then
the number of distinct marks 
is equal to the rank of $\bP_R (G)$ over $R$. Let us formulate this as a corollary.

\begin{cor}
Suppose all $\Phi (A)$ are finite abelian groups, $N$ the least common multiple of all the orders of elements in all $\Phi (A)$.
If $R$ is a field containing primitive $N$-th root of unity then the mark homomorphisms define an isomorphism
$\bP_R (G) \rightarrow \oplus R$.
\end{cor}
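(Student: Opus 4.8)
The plan is to present the given map as a matrix --- the \emph{table of marks} of $\bP_R(G)$ --- and to prove it square and invertible. Invertibility will come from showing the matrix is block upper-triangular for the subconjugacy order, with diagonal blocks that are non-degenerate ``symmetrised'' character tables of the groups $\Phi(A)$.

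Fix representatives $A_1,\dots,A_k$ of the $G$-conjugacy classes of subgroups. Reading off the defining relations of $\bP_R(G)$, an $R$-basis is given by the elements $\langle a,A_i\rangle$ in which, for each $i$, the element $a$ runs over representatives of the orbits of $N_G(A_i)$ on $\Phi(A_i)$ (the action being through the maps $\Phi(\gamma_g)$). Dually, since $R$ contains a primitive $N$-th root of unity, the character group $\widehat{\Phi(A_i)}=\operatorname{Hom}(\Phi(A_i),R^\times)$ has order $|\Phi(A_i)|$, and I take the marks $f_{A_i}^{\alpha}$ with $\alpha$ running over representatives of the orbits of $N_G(A_i)$ on $\widehat{\Phi(A_i)}$; each is an $R$-algebra homomorphism $\bP_R(G)\to R$ by Lemma~\ref{lem:f_i}, $N_G(A_i)$-equivalent characters yield equal marks, and --- routinely, from the conjugation-equivariance of \eqref{eqn:hom} --- every mark of $\bP_R(G)$ into $R$ arises this way. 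By Brauer's permutation lemma (a finite group acting on a finite abelian group has equally many orbits on the group and on its character group) these marks and this basis are indexed by sets of a common size $m$. Hence $F=\prod f_{A_i}^{\alpha}\colon\bP_R(G)\to\bigoplus R$, into a product of $m$ copies of $R$, is an $R$-algebra homomorphism whose matrix in these bases is the $m\times m$ table of marks $M=\bigl(f_{A_i}^{\alpha}(\langle a,A_j\rangle)\bigr)$; as $R$ is a field, $F$ is an isomorphism precisely when $M$ is invertible (and then, automatically, the $m$ marks are pairwise distinct, so $m$ is at once the rank of $\bP_R(G)$ and the number of distinct marks). Everything reduces to the invertibility of $M$.

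Next I would exhibit the triangular shape. By \eqref{eqn:hom}, $f_{A_i}^{\alpha}(\langle a,A_j\rangle)$ is $|A_j|^{-1}$ times a sum over $\{g\in G\mid gA_ig^{-1}\subseteq A_j\}$, which is empty unless $A_i$ is $G$-subconjugate to $A_j$; ordering $A_1,\dots,A_k$ by non-decreasing order therefore makes $M$ block upper-triangular. In the diagonal block $A_i=A_j=:A$ the index set is exactly $W:=N_G(A)$, and reorganising the sum by orbits gives
\[
f_A^{\alpha}(\langle a,A\rangle)=\frac{|W|}{|A|}\cdot\frac{1}{|W\cdot\alpha|}\sum_{\beta\in W\cdot\alpha}\beta(a),
\]
so it remains to show that each diagonal block $M_A$ is invertible.

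This last point is the crux. Work inside $V=R^{\Phi(A)}$, on which $W$ acts through the $\Phi(\gamma_g)$. The subspace $V^W$ of $W$-invariant functions consists of those constant on the $W$-orbits of $\Phi(A)$, with the orbit-indicator functions as a basis, so $\dim_R V^W=\#(\Phi(A)/W)$. On the other hand, since $R$ has the requisite root of unity there are $|\Phi(A)|$ characters of $\Phi(A)$, and by linear independence of characters they form an $R$-basis of $V$; the group $W$ permutes this basis, so the orbit sums $\psi_{\mathcal O}=\sum_{\beta\in\mathcal O}\beta$, over $\mathcal O\in\widehat{\Phi(A)}/W$, are linearly independent elements of $V^W$, and by Brauer's lemma their number $\#(\widehat{\Phi(A)}/W)$ equals $\#(\Phi(A)/W)=\dim_R V^W$ --- hence they form a second basis of $V^W$. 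Expanding $\psi_{\mathcal O}$ in the orbit-indicator basis produces exactly the numbers $\sum_{\beta\in\mathcal O}\beta(a)$ appearing above (up to the units $[N_G(A):A]$ and $|W\cdot\alpha|^{-1}$ of $R$), so $M_A$ is a rescaling of the transition matrix between two $R$-bases of $V^W$, hence invertible. I expect this diagonal-block step --- matching the two descriptions of $V^W$ and invoking Brauer's permutation lemma --- to be the main obstacle; throughout, the hypothesis $\operatorname{char}R\nmid|G|$, implicit already in \eqref{eqn:hom} through the division by $|A_j|$, is in force.
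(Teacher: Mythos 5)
Your argument is correct and is essentially the proof the paper leaves implicit: the counting step (orbits of $N_G(A)$ on $\Phi(A)$ versus on its character group, i.e.\ Brauer's permutation lemma, which is exactly the remark preceding the corollary) combined with the standard block-triangularity of the table of marks and invertibility of the diagonal blocks, as in the classical Burnside ring and in Hartmann--Yal\c{c}in's treatment of mark homomorphisms. Your explicit caveat that $\operatorname{char}R\nmid |G|$ must be in force is well taken: it is already implicit in the factor $\frac{1}{|B|}$ of Equation~(\ref{eqn:hom}), and without it the statement fails (take $\Phi$ trivial, $G=C_p$, $R=\mathbb{F}_p$).
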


Before formulating the next property, let us introduce the notion of {\em the dual set}.
Let $Y$ be a $\Phi$-decorated set such that each frill $\pi_m \in \Phi(G_m)$ is invertible.
The dual set $Y^\vee$ 
has the same underlying $G$-set $Y$ but the frills are inverted:
each $\pi_m \in \Phi (G_m)$ is replaced with $\pi_m^{-1}$. 

\begin{lemma}\label{com_inv}
If $\Phi (A)$ is abelian for each $A\leq G$ then
$\bP_R (G)$ is a commutative ring.
If $\Phi (A)$ is a group for each $A\leq G$ then
$\bP_R (G)$ is a ring with involution.
\end{lemma}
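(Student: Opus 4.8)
The plan is to use the same $\Phi$-decorated $G$-set model that proved the first two lemmas, since commutativity and the involution are most transparent at the level of isomorphism classes of decorated sets rather than on the combinatorial generators $\langle a,A\rangle$.

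First I would prove commutativity. Given two $\Phi$-decorated $G$-sets $(X,\pi)$ and $(Y,\psi)$, the product $[X]\cdot[Y]$ is represented by $X\times Y$ with frill at $(x,y)$ equal to
$$
\pi_{(x,y)}=\Phi(\gamma_1:G_{(x,y)}\to G_x)(\pi_x)\,\Phi(\gamma_1:G_{(x,y)}\to G_y)(\psi_y),
$$
while $[Y]\cdot[X]$ is represented by $Y\times X$ with frill at $(y,x)$ equal to $\Phi(\gamma_1:G_{(x,y)}\to G_y)(\psi_y)\,\Phi(\gamma_1:G_{(x,y)}\to G_x)(\pi_x)$, using that $G_{(y,x)}=G_y\cap G_x=G_{(x,y)}$. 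The swap map $(x,y)\mapsto(y,x)$ is a $G$-equivariant bijection $X\times Y\to Y\times X$, and under the hypothesis that each $\Phi(A)$ is abelian the two frills agree after transport along this bijection. Hence the two products represent isomorphic decorated sets and $[X]\cdot[Y]=[Y]\cdot[X]$; since such classes span $\bP_R(G)$ over $R$ and multiplication is $R$-bilinear, the ring is commutative.

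Next I would construct the involution. When every $\Phi(A)$ is a group, every frill is invertible, so the dual set $Y^\vee$ is defined; I first check $Y\mapsto Y^\vee$ is well defined on isomorphism classes and additive, so it induces an $R$-linear (or $R$-semilinear, as one prefers — here $R$ acts trivially on frills so it is $R$-linear) map $\sigma:\bP_R(G)\to\bP_R(G)$ with $\sigma^2=\mathrm{id}$, because inverting twice returns the original frill and the underlying $G$-set is unchanged. The content is that $\sigma$ reverses products: I must show $((X,\pi)\times(Y,\psi))^\vee\cong (Y,\psi)^\vee\times(X,\pi)^\vee$. On underlying sets both are $X\times Y$ (up to the swap bijection), and the frill of the left side at $(x,y)$ is
$$
\bigl(\Phi(\gamma_1)(\pi_x)\,\Phi(\gamma_1)(\psi_y)\bigr)^{-1}=\Phi(\gamma_1)(\psi_y)^{-1}\,\Phi(\gamma_1)(\pi_x)^{-1}=\Phi(\gamma_1)(\psi_y^{-1})\,\Phi(\gamma_1)(\pi_x^{-1}),
$$
which is exactly the frill of $(Y^\vee\times X^\vee)$ at $(y,x)$; here I use that each $\Phi(\gamma_g)$ is a semigroup homomorphism, hence commutes with inversion in the groups $\Phi(A)$. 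Thus $\sigma$ is an anti-automorphism, i.e.\ $\bP_R(G)$ is a ring with involution; in the monoid case $\sigma(\langle 1,G\rangle)=\langle 1,G\rangle$ so $\sigma$ is unital.

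The only mildly delicate point — and the one I would write out carefully — is the bookkeeping of which connecting morphism $\Phi(\gamma_\bullet)$ appears where: the restriction maps $G_{(x,y)}\hookrightarrow G_x$ and $G_{(x,y)}\hookrightarrow G_y$ are induced by $\gamma_1$ in both factors, so inversion genuinely passes through them, and the swap bijection identifies $G_{(x,y)}$ with $G_{(y,x)}$ on the nose. Once this is pinned down, commutativity is immediate in the abelian case and the anti-multiplicativity of duality reduces to the identity $(uv)^{-1}=v^{-1}u^{-1}$ in each group $\Phi(A)$, combined with naturality of $\Phi$ with respect to inversion. No essential obstacle remains beyond this routine verification.
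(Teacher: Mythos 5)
Your proposal is correct and follows essentially the same route as the paper, which simply defines the involution by $[Y]^\vee:=[Y^\vee]$ on the decorated-set model and notes that both statements follow from the definition of $\bP_R(G)$. You have merely written out the routine verifications (the swap isomorphism for commutativity and the identity $(uv)^{-1}=v^{-1}u^{-1}$ together with the fact that $\Phi(\gamma_g)$ preserves inverses for anti-multiplicativity) that the paper leaves implicit.
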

%{\bf Proof.}
\begin{proof}
The involution is defined by $[Y]^\vee := [Y^\vee]$.
Now both statements follow from the definition of  $\bP_R (G)$. 
%$\Box$
\end{proof}

If $R=\bZ$, we write $\bP (G)$ for $\bP_R (G)$.
Several functors $\Phi$ are interesting for applications.
First of all, the trivial functor $\Phi (H) = \{ 1\}$ gives the classical Burnside ring $\bB (G)$,
the Grothendieck ring of finite $G$-sets.
Another interesting functor is  $\Phi (H) = \Rep_+ (H)$, the effective part of the representation ring of $H$ over $\bZ$.
It has two different semigroup structures, corresponding to tensor products or direct sums of representations.
The corresponding Burnside ring $\bP (G)$ is
the Grothendieck ring of pairs $(X,V)$, a finite $G$-set and a $G$-equivariant vector bundle on it.
Another interesting functor is  the effective part of Burnside ring itself $\Phi (H) = \bB_+ (H)$.
Again it has two different semigroup structures, corresponding to products or unions. 
The corresponding Burnside ring $\bP (G)$ is
the Grothendieck ring of fibred $G$-sets $Y\rightarrow X$, i.e. surjective maps of $G$-sets, where one considers $Y$
as an equivariant fibration over $X$.
Hartmann and Yal\c{c}in have studied
$\Phi (H) = H^\ast (H , M)$ and $\Phi (H) = H^n (H , M)$, where $M$ is a $G$-module \cite{HartmannYalcin07}.
They have called the corresponding $\bP (G)$ a cohomological Burnside ring.

The second cohomological Burnside ring is of particular interest to us.
It will be studied for the rest of the paper.
Namely, if $\bK$ is a field, we need the functor  
$\mu_\bK (H) = H^2 (H ,\bK^\times)$
where $H$ acts trivially on the multiplicative group $\bK^\times$ of the field.
As soon as  $\bK^\times$ has enough torsion, say $\bK$ admits a $|G|$-th primitive root of $1$ (for instance, if $\bK$ is algebraically closed
of characteristic $p$ not dividing $|G|$),
$\mu_\bK (H)$ is the Schur multiplier of $H$
\cite{Karpilovsky87}.
In particular, it is independent of $\bK$ and will be denoted just $\mu (H)$ with the corresponding Burnside ring $\bBM (G)$.

We present the tables of marks for $\bB^\mu (G)$ of the symmetric groups $S_4$ and $S_5$ in Tables~\ref{tab:S4} and~\ref{tab:S5}. 
We use the notation 
$\langle K \rangle = \langle 1 , K\rangle$,
$\langle K^\prime \rangle = \langle x , K\rangle$
where $x$ is a generator of $C_2$, the only possible nontrivial $\mu (H)$, 
and
$f^\prime_H$ is a mark with nontrivial character of $C_2$.
In these tables $D_8$ and $D_{10}$ denote the standard dihedral group of orders 8 and 10, 
$K_1=\langle(12),(34)\rangle$ and $K_2=\langle(12)(34),(13)(24)\rangle$
denote nonconjugate Klein four groups,
$C_n$ denotes a cyclic subgroup of order $n$ generated by a single cycle.
Notation $H_n$ is reserved for various non-standard subgroups of order $n$: 
$H_2$ is generated by $(1,2)(3,4)$,
$H_{20}$ is the normaliser of $C_5$ in $S_5$,
$H_{6}:= \langle(123),(12)(45)\rangle$ is a nonstandard $S_3$.   
%All Schur multipliers have order at most 2, so we use $H$ for $\langle 1 , H\rangle$ 
%and $H^\prime$ for $\langle x , H\rangle$ with the nontrivial $x$.
The columns of the tables correspond to values of the marks $f_H$ or $f^\prime_H$ 
ordered as for the rows. 
Appended to the tables are the values of the equivariant Euler characteristic
${\mathfrak M}:\bBM(G)\rightarrow \mathbb{Z}$. 
It will be defined 
in Section~\ref{sec_mod_cat}.
%taking a module category to the number of simple objects it contains.
Notice that $\bB^{\mu_\bK}$ over any field $\bK$ 
of characteristic not 2 will have the same table of marks.

The tables were computed by lifting 
data from the ordinary table of marks and the following lemma.
\begin{lemma}
Let $H\leq K\leq G$, $\bK$ a field of characteristic not 2.  Suppose that $|\mu(K)|=|\mu(H)|= 2$ and 
2 does not divide the index $|K:H|$. 
Then $f^\prime_H(\langle K^\prime \rangle)=-f_H (\langle K \rangle)$.
\end{lemma}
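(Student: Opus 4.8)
The plan is to evaluate both sides directly from the mark formula~\eqref{eqn:hom}, specialising to the decorated set representing $\langle K' \rangle = \langle x, K\rangle$. Write $\nu$ for the nontrivial character of $C_2 = \mu(H)$, so that $f'_H = f_H^\nu$, and write $\mathbf{1}$ for the trivial character, so $f_H = f_H^{\mathbf 1}$. For the left-hand side, formula~\eqref{eqn:hom} gives
$$
f_H^\nu(\langle x, K\rangle) = \frac{1}{|K|}\sum_{g\in X} \nu\bigl(\Phi(\gamma_g : H \to K)(x)\bigr),
\qquad X = \{g\in G \mid gHg^{-1}\subseteq K\},
$$
and the right-hand side is $-f_H^{\mathbf 1}(\langle K\rangle) = -\frac{1}{|K|}|X|$, since $\mathbf{1}$ is identically $1$ on $C_2$. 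So the claim reduces to the assertion that for every $g\in X$ we have $\Phi(\gamma_g : H\to K)(x) \neq 1$ in $\mu(K)$; equivalently, that $\gamma_g^\ast : \mu(K)\to\mu(H)$ (or rather the covariant map $\Phi(\gamma_g)$ induced on Schur multipliers by the conjugation $H\hookrightarrow gKg^{-1}\xrightarrow{\sim}K$) sends the nonzero element of $\mu(gHg^{-1})$ to the nonzero element of $\mu(K)$. Since $g$ just conjugates everything, after replacing $H$ by $gHg^{-1}$ it suffices to treat the case $g=1$, i.e. to show that the inflation/corestriction-type map attached to the honest inclusion $H\hookrightarrow K$ is injective on $\mu(H)=C_2$.

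The key step is an odd-index transfer argument. Here $\Phi(\gamma_1 : H\to K)$ on $\mu_\bK = H^2(-,\bK^\times)$ is the \emph{corestriction} (transfer) map $\Cor_H^K : H^2(H,\bK^\times)\to H^2(K,\bK^\times)$, as used in the construction of the multiplication in $\bP_R(G)$ (the first factor in the product formula is a restriction, the relevant "going up" map in the mark formula is its adjoint transfer). The standard composite $\Cor_H^K\circ\Res_H^K$ equals multiplication by the index $|K:H|$ on $H^2(K,\bK^\times)$, and dually $\Res_H^K\circ\Cor_H^K$ is multiplication by $|K:H|$ on $H^2(H,\bK^\times)$ only after summing over double cosets — but the cleaner statement I will use is: $\Cor_H^K\circ\Res_H^K = |K:H|\cdot \mathrm{id}$ on $H^2(K,\bK^\times)$. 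Since $|K:H|$ is odd and $|\mu(K)|=2$, multiplication by $|K:H|$ is the identity on $\mu(K)$, so $\Cor_H^K$ is surjective; as $|\mu(H)|=|\mu(K)|=2$ and $\Cor_H^K$ is surjective onto a group of the same finite order, it is an isomorphism. In particular $\Cor_H^K(x)\neq 1$, which is exactly what we needed.

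Assembling: for each $g\in X$ we get $\nu(\Phi(\gamma_g : H\to K)(x)) = \nu(\text{nonzero element of }C_2) = -1$, hence $f_H^\nu(\langle x,K\rangle) = \frac{1}{|K|}\sum_{g\in X}(-1) = -\frac{|X|}{|K|} = -f_H(\langle K\rangle)$, as claimed.

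The main obstacle is purely bookkeeping: pinning down precisely which functorial map $\Phi(\gamma_1 : H\to K)$ on $H^2(-,\bK^\times)$ appears in~\eqref{eqn:hom} — it is contravariant on $\sS(G)$ in the variance conventions of the paper, but on cohomology of the \emph{ambient} group direction it is the transfer $\Cor_H^K$ rather than inflation or restriction — and then invoking the correct double-coset formula so that the odd-index cancellation genuinely kills the contribution. One must also check the hypothesis $|\mu(K)|=2$ is used only to conclude that an odd multiple acts trivially; no deeper property of $\mu$ is needed. Everything else (equivariance of frills, the reduction to $g=1$ by conjugation, the vanishing of $\nu$ on $1$) is routine.
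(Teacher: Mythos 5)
Your overall strategy is the same as the paper's: evaluate both marks via Equation~(\ref{eqn:hom}), reduce to showing that the nontrivial class attached to $\langle K^\prime\rangle$ is carried to a nontrivial class for every $g$ with $gHg^{-1}\subseteq K$, and obtain that nontriviality from an odd-index restriction--corestriction identity. The final assembly and the reduction to $g=1$ (replacing $H$ by $gHg^{-1}$, which satisfies the same hypotheses) are fine. However, there is a concrete error at the key identification. The functor $\Phi=\mu_\bK=H^2(-,\bK^\times)$ is \emph{contravariant} on $\sS(G)$, so $\Phi(\gamma_g:H\to K)$ is a map $\mu(K)\to\mu(H)$, namely the pullback (restriction) along the injection $h\mapsto ghg^{-1}$; it is not the corestriction $\Cor_{H,K}:H^2(H,\bK^\times)\to H^2(K,\bK^\times)$. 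This is forced by type-checking the mark formula itself: the frill $x$ of $\langle K^\prime\rangle=\langle x,K\rangle$ lives in $\mu(K)$, while the character $\nu$ is a character of $\mu(H)$, so the map sandwiched between them must go from $\mu(K)$ to $\mu(H)$. (The maps occurring in the product formula are likewise restrictions, not transfers.) Consequently your reduction should read ``$\Phi(\gamma_g)(x)\neq 1$ in $\mu(H)$,'' and the fact to be proved is $\Res_{K,H}(x)\neq 1$ --- exactly what the paper's proof establishes --- not ``$\Cor_{H,K}(x)\neq 1$,'' which concerns a different map and, as written, even feeds $\Cor_{H,K}$ an element of the wrong group.

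The repair is immediate from the identity you yourself invoke: $\Cor_{H,K}\circ\Res_{K,H}=|K:H|\cdot\mathrm{id}$ on $H^2(K,\bK^\times)$. Since $|K:H|$ is odd and $\mu(K)$ has exponent $2$, this composite is the identity on $\mu(K)$, so $\Res_{K,H}$ is injective and therefore sends the nontrivial class of $\mu(K)$ to the nontrivial class of $\mu(H)$ (both groups having order $2$); then each summand in~(\ref{eqn:hom}) contributes $\nu$ of a nontrivial element, i.e.\ $-1$, and the lemma follows as you compute. With that one-sentence correction your argument closes and is essentially the paper's proof (the paper phrases the odd-index step via $\Res_{K,H}(\Cor_{H,K}(\nu))=|K:H|\nu$ and deduces $\Res_{K,H}(\tau)\neq 1$; your form of the index identity is the standard one and suffices). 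As written, though, the central step proves a statement about the wrong map, and your own commentary (``the relevant going up map in the mark formula is its adjoint transfer'') shows this is a genuine confusion about the variance of $\Phi$ rather than a slip of notation, so it must be fixed before the proof can be accepted.
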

\begin{proof}
%{\bf Proof.}
Let $\tau\in \mu(K)$ and $\nu\in \mu(H)$ be non-trivial cocycles. 
The corestriction map on cocycles satisfies $\Res_{K,H}(\Cor_{H,K}(\nu))=|K:H|\nu$~\cite[Ch.~1]{Karpilovsky87}.  
Thus, $\Res_{K,H}(\Cor_{H,K}(\nu))=\nu$. 
Therefore $\nu$ corestricts to the nontrivial cocycle $\tau$ and $\Res_{K,H}(\tau)\neq 1$. 
The lemma now follows from Equation~(\ref{eqn:hom}).
%$\Box$
\end{proof}

\section{$G$-algebras and $\mu_\bK$-decorated sets}

A $G$-algebra is an associative algebra $A$ with a (left) action of $G$. 
As a default option, an action is always a left action. However, right actions
often appear naturally. For instance, 
the group $G$ acts (on the right) on the abelian category $A-\Mod$
of left $A$-modules.

We say that $G$ has a right action on a category $\sC$ if for every 
$g\in
G$, we have an autoequivalence, 
$[g]:\sC \rightarrow \sC$, together with
natural isomorphisms $\gamma_{g,h}:[g]\circ [h] \rightarrow [hg]$ such that $[1]$ is the identity functor.
In this case, we call $\sC$ a $G$-category.

Sometimes in the
literature such actions are called ``weak'' as opposed to
``strong'' actions, which satisfy 
commutativity of the associativity constraint diagrams
$$
\begin{CD} 
{[f]\circ[g]\circ [h]} @> {\gamma_{f,g}} >> {[gf]\circ [h]} \\ 
@V{\gamma_{g,h}} VV @VV {\gamma_{gf,h}} V \\ 
{[f]\circ[hg]} @> {\gamma_{f,hg}} >> {[hgf]} \\ 
\end{CD} 
$$
for all $f,g,h\in G$.
%associativity constraints for
%We do not use the term ``weak'' since 
Here we are
not interested in associativity constraints.
%We further require that $[1]$ is the identity functor and the associativity constraint diagrams
%
%
%Sometimes in the
%literature such actions are called ``strong'' as opposed to
%``weak'' actions, which do not necesarily satisfy the associativity constraints.
% for
%these isomorphisms. We do not use the term ``weak'' since we are
%not interested in associativity constraints.

Let us describe  $[g]$ and $\gamma_{g,h}$ for $\sC=A-\Mod$, in detail. On objects, $M^{[g]}=M$ with the new action
of $A$ given by $a\cdot^{[g]} m = g(a)m$. On morphisms, $f^{[g]}=f$.
Finally, for each object $M$, $\gamma_{g,h}(M): (M^{[h]})^{[g]} \rightarrow M^{[hg]}$
is the identity map. Notice that
$a\cdot^{[h][g]} m = g(a)\cdot^{[h]} m = h(g(a))m=a\cdot^{[hg]} m$.
Notice further that this action is strong.

Going back to a general $G$-category, we say that an object $X$ is {\em equivariant} if
all its twists $X^{[g]}$ are isomorphic to $X$ and there exists a system of isomorphisms
$\alpha_g : X\rightarrow X^{[g]}$ such that  the diagrams
$$
\begin{CD} 
{X} @> {\alpha_{h}} >> {X^{[h]}} \\ 
@V{\alpha_{gh}} VV @VV {(\alpha_{g})^{[h]}} V \\ 
{X^{[gh]}} @< {\gamma_{g,h}(X)} << {X^{[g][h]}} \\ 
\end{CD} 
$$
are commutative for all $g,h\in G$. This notion allows us to characterise $A\ast G$-modules among
$A$-modules where
$A\ast G$ is the skew group algebra, i.e. a free left $A$-module with a basis $G$ and a multiplication coming from those of $A$ and $G$ with 
an additional rule 
$ga= g(a) g$ for all  $a\in A$, $g\in G$.
\begin{lemma}
\label{G-eq}
An $A$-module $M$ is an equivariant object of $A-\Mod$ 
if and only if 
it admits a structure of an $A\ast G$-module.
\end{lemma}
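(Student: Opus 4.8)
The plan is to establish the equivalence by unwinding both structures explicitly and matching them term by term. First I would recall the skew group algebra $A\ast G$: an $A\ast G$-module is an $A$-module $M$ together with a left $G$-action on the underlying abelian group, say $g\mapsto \rho_g \in \End_\bZ(M)$, satisfying $\rho_g(am) = g(a)\rho_g(m)$ and $\rho_g\rho_h = \rho_{gh}$, $\rho_1 = \mathrm{id}$. The key observation is that the compatibility condition $\rho_g(am) = g(a)\rho_g(m)$ says precisely that $\rho_g$, viewed as a map $M^{[g]} \to M$, is a morphism of $A$-modules: indeed, a map $f\colon M^{[g]}\to M$ is $A$-linear iff $f(a\cdot^{[g]} m) = af(m)$, i.e. $f(g(a)m) = af(m)$. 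So I would set $\alpha_g := \rho_g^{-1} \colon M \to M^{[g]}$ (note $\rho_g$ is invertible with inverse $\rho_{g^{-1}}$).

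Next I would check that the cocycle-type diagram in the definition of equivariant object is exactly the associativity condition $\rho_g\rho_h = \rho_{gh}$, translated through the identifications. Since $\gamma_{g,h}(M)\colon (M^{[h]})^{[g]} \to M^{[hg]}$ is the identity map of the underlying object (as computed in the paragraph preceding Lemma~\ref{G-eq}), the required commutativity $\gamma_{g,h}(X)\circ (\alpha_g)^{[h]}\circ \alpha_h = \alpha_{gh}$ becomes, after substituting $\alpha_g = \rho_g^{-1}$ and using $f^{[h]} = f$ on morphisms, the identity $\rho_h^{-1}\rho_g^{-1} = \rho_{gh}^{-1}$, equivalently $\rho_{gh} = \rho_g\rho_h$. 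Wait — I need to be careful about the order: the natural isomorphisms in this paper go $\gamma_{g,h}\colon [g]\circ[h]\to[hg]$, so the composition law is $\rho_g\rho_h = \rho_{hg}$, and correspondingly $\alpha_g := \rho_{g^{-1}}$ or one adjusts the indexing; I would pin down the convention at the outset and keep it consistent. Either way, the diagram and the multiplicativity of $\rho$ are literally the same equation.

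So the proof is a dictionary: given an $A\ast G$-module structure, define $\alpha_g$ as above and verify the diagram commutes (immediate from multiplicativity); conversely, given an equivariant structure $\{\alpha_g\}$, define $\rho_g := \alpha_{g}^{-1}$ (with appropriate indexing), observe that $A$-linearity of $\alpha_g\colon M\to M^{[g]}$ gives the skew-commutation relation $\rho_g(am) = g(a)\rho_g(m)$, and that the diagram gives $\rho_g\rho_h = \rho_{hg}$ together with $\rho_1 = \mathrm{id}$ (from $[1] = \mathrm{id}$ and the normalisation). These two constructions are mutually inverse by inspection. The main obstacle — really the only place to slip — is bookkeeping the two opposing conventions: the \emph{right} action of $G$ on $A\text{-}\Mod$ versus the \emph{left} module structure over $A\ast G$, which is why the twist indices appear reversed ($(M^{[h]})^{[g]} \cong M^{[hg]}$ rather than $M^{[gh]}$). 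I would state the correspondence with $g\leftrightarrow g^{-1}$ explicitly so that the variances match, then the verification is routine.
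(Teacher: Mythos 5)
Your overall strategy --- unwind the $A\ast G$-module structure into a family of maps $\rho_g$ and match the two axiom systems term by term --- is exactly the paper's (one-line) proof, which sets $\alpha_g(m)=g\cdot m$ and checks the axioms. But the concrete dictionary you wrote down is wrong, and the place where it is wrong is precisely the bookkeeping you postpone. In the paper's convention $a\cdot^{[g]}m=g(a)m$, the relation $\rho_g(am)=g(a)\rho_g(m)$ says that $\rho_g$ is $A$-linear as a map $M\to M^{[g]}$, \emph{not} as a map $M^{[g]}\to M$: for the latter you would need $\rho_g(g(a)m)=a\,\rho_g(m)$, whereas in fact $\rho_g(g(a)m)=g(g(a))\,\rho_g(m)$. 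Consequently the equivariant structure is $\alpha_g:=\rho_g$ itself, with no inverse and no $g\leftrightarrow g^{-1}$ adjustment. Your proposal $\alpha_g:=\rho_g^{-1}=\rho_{g^{-1}}$ fails both verifications: as a map $M\to M^{[g]}$ it satisfies $\alpha_g(am)=g^{-1}(a)\alpha_g(m)$ rather than $g(a)\alpha_g(m)$, so it is not a morphism in $A-\Mod$ with the required target; and since all structural maps ($\gamma_{g,h}$ and the action of $[h]$ on morphisms) are identities of underlying sets, the equivariance diagram demands $\alpha_g\circ\alpha_h=\alpha_{gh}$, which for $\alpha_g=\rho_{g^{-1}}$ reads $\rho_{g^{-1}h^{-1}}=\rho_{h^{-1}g^{-1}}$ --- false in general for nonabelian $G$. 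Your fallback ``the composition law is $\rho_g\rho_h=\rho_{hg}$'' is also incorrect: for a left $A\ast G$-module one always has $\rho_g\rho_h=\rho_{gh}$; the ``right action'' convention on the category does not alter the module axiom, and it is exactly what makes the untwisted assignment $\alpha_g=\rho_g$ compatible with the twist composition $(M^{[h]})^{[g]}=M^{[hg]}$.

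So the gap is concrete rather than stylistic: the dictionary you promise to ``pin down'' is not either of the two you offer, and the claim that ``either way the diagram and the multiplicativity of $\rho$ are literally the same equation'' holds only for the correct choice $\alpha_g=\rho_g$. With that correction the remaining steps you outline (the normalisation $\rho_1=\mathrm{id}$ coming from $[1]=\mathrm{id}$, and the two constructions being mutually inverse) do go through routinely and reproduce the paper's argument.
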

%{\bf Proof.}
\begin{proof}
The connection between the equivariant structure and the action of $G$ is given by 
$\alpha_g (m) = g\cdot m$.
One can verify that the two sets of axioms are equivalent.
%$\Box$
\end{proof}

A functor 
$\Phi : \sC \rightarrow \sD$ between $G$-categories is a $G$-functor
if it is equipped with a system of natural isomorphisms 
$$
\beta_g : \Phi \circ [g]_\sC\rightarrow [g]_\sD \circ \Phi \, , \ \  
g\in G
$$ such that the square
$$
\begin{CD} 
{\Phi(X^{[g]})} @> {\beta_{g}(X)} >> {\Phi(X)^{[g]}} \\ 
@V{\Phi(t^{[g]})} VV @VV {\Phi(t)^{[g]}} V \\ 
{\Phi(Y^{[g]})} @> {\beta_{g}(Y)} >> {\Phi(Y)^{[g]}} \\ 
\end{CD} 
$$
is commutative for all $t\in\sC(X,Y)$, $g\in G$ and the pentagon

\begin{picture}(300,120)(0,0)
\put(0,80){$\Phi(X^{[gh]})$}
\put(110,80){$\Phi(X^{[g][h]})$}
\put(230,80){$\Phi(X^{[g]})^{[h]}$}
\put(100,83){\vector(-1,0){60}}
\put(160,83){\vector(1,0){60}}
\put(50,87){{\small $\Phi(\gamma_{g,h}(X))$}}
\put(167,87){{\small $\Phi(\beta_{h}(X^{[g]}))$}}
\put(0,20){$\Phi(X)^{[gh]}$}
\put(235,20){$\Phi(X)^{[g][h]}$}
\put(15,70){\vector(0,-1){40}}
\put(245,70){\vector(0,-1){40}}
\put(20,50){{\small $\beta_{gh}(X)$}}
\put(203,50){{\small $\beta_{g}(X)^{[h]}$}}
\put(225,23){\vector(-1,0){178}}
\put(100,30){{\small $\gamma_{g,h}(\Phi (X))$}}
\end{picture}
\newline
is commutative for all objects $X\in\sC$ and $g,h\in G$.
A $G$-equivalence is a $G$-functor which is an equivalence.

\begin{lemma}
\label{G-eq_func}
Let  $\Phi : \sC \rightarrow \sD$ be a $G$-equivalence between $G$-categories.
If $X$ is a $G$-equivariant object in $\sC$ then $\Phi (X)$ is a $G$-equivariant object in $\sD$.
\end{lemma}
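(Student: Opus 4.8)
The plan is to manufacture an equivariant structure on $\Phi(X)$ out of the given equivariant structure $\{\alpha_g\colon X\to X^{[g]}\}$ on $X$ together with the structure isomorphisms $\beta_g$ of the $G$-functor $\Phi$. The natural candidate is
$$
\alpha'_g\ :=\ \beta_g(X)\circ\Phi(\alpha_g)\ \colon\ \Phi(X)\longrightarrow\Phi(X^{[g]})\longrightarrow\Phi(X)^{[g]},\qquad g\in G.
$$
Each $\alpha'_g$ is an isomorphism: $\Phi(\alpha_g)$ is one because $\Phi$ is a functor and $\alpha_g$ is an isomorphism, while $\beta_g(X)$ is one because $\beta_g$ is a natural isomorphism. (In fact only the $G$-functor structure of $\Phi$ will be used; that $\Phi$ is an equivalence is not needed for this statement, though it is of course needed for a converse.)

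Next I would record the two identities that drive the verification. Applying the naturality square of $\beta_h$ to the morphism $\alpha_g\colon X\to X^{[g]}$ gives
$$
\Phi(\alpha_g)^{[h]}\circ\beta_h(X)=\beta_h(X^{[g]})\circ\Phi\bigl((\alpha_g)^{[h]}\bigr),
$$
and the $G$-functor pentagon, evaluated at the object $X$, gives
$$
\gamma_{g,h}(\Phi(X))\circ\beta_g(X)^{[h]}\circ\beta_h(X^{[g]})=\beta_{gh}(X)\circ\Phi(\gamma_{g,h}(X)).
$$

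It then remains to check that the $\alpha'_g$ satisfy the defining square of an equivariant object, i.e.\ $\gamma_{g,h}(\Phi(X))\circ(\alpha'_g)^{[h]}\circ\alpha'_h=\alpha'_{gh}$ for all $g,h\in G$. Expanding the left-hand side via the definition of $\alpha'$ and functoriality of $(\,\cdot\,)^{[h]}$, then using the naturality identity above to slide $\Phi(\alpha_g)^{[h]}$ past $\beta_h(X)$, and then the pentagon identity, collapses the left-hand side to $\beta_{gh}(X)\circ\Phi\bigl(\gamma_{g,h}(X)\circ(\alpha_g)^{[h]}\circ\alpha_h\bigr)$. Finally, applying $\Phi$ to the defining square of the equivariant object $X$ itself, namely $\gamma_{g,h}(X)\circ(\alpha_g)^{[h]}\circ\alpha_h=\alpha_{gh}$, turns this into $\beta_{gh}(X)\circ\Phi(\alpha_{gh})=\alpha'_{gh}$, which is exactly what is wanted.

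This is a pure diagram chase, so I do not foresee a genuine obstacle; the only delicate point is keeping track of the directions of the arrows and the positions of the twists — in particular that $\beta_g(X)$ runs from $\Phi(X^{[g]})$ to $\Phi(X)^{[g]}$, that $(\,\cdot\,)^{[h]}$ is applied to whole morphisms, and that both the equivariance square of $X$ and the $G$-functor pentagon are oriented so that $\gamma_{g,h}$ appears as the outermost map in each composite.
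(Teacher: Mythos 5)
Your proof is correct and follows essentially the same route as the paper: the same candidate structure $\beta_g(X)\circ\Phi(\alpha_g)$, verified by combining the equivariance square for $X$, the $G$-functor naturality square of $\beta_h$ applied to $\alpha_g$, and the pentagon — the paper just assembles these into one large commutative diagram rather than an equational chase. Your side remark that only the $G$-functor structure (not the equivalence hypothesis) is used is also accurate.
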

%{\bf Proof.}
\begin{proof}
Let $X=(X,\alpha_g)$ be an equivariant object.
The equivariant structure on $\Phi(X)$ is given by 
the compositions
$\beta_g(X) \circ \Phi (\alpha_g) : \Phi (X) \rightarrow \Phi (X^{[g]}) \rightarrow \Phi (X)^{[g]}$.
To verify the axiom we analyse the following diagram.

\begin{picture}(300,170)(0,0)
\put(0,140){$\Phi(X)$}
\put(110,140){$\Phi(X^{[h]})$}
\put(230,140){$\Phi(X)^{[h]}$}
\put(40,143){\vector(1,0){60}}
\put(160,143){\vector(1,0){60}}
\put(57,147){{\small $\Phi(\alpha_{h})$}}
\put(170,147){{\small $\beta_{h}(X)$}}
\put(15,130){\vector(0,-1){40}}
\put(125,130){\vector(0,-1){40}}
\put(245,130){\vector(0,-1){40}}
\put(20,110){{\small $\Phi(\alpha_{gh})$}}
\put(130,110){{\small $\Phi(\alpha_g^{[h]})$}}
\put(200,110){{\small $\Phi(\alpha_g)^{[h]}$}}
\put(0,80){$\Phi(X^{[gh]})$}
\put(110,80){$\Phi(X^{[g][h]})$}
\put(230,80){$\Phi(X^{[g]})^{[h]}$}
\put(100,83){\vector(-1,0){60}}
\put(160,83){\vector(1,0){60}}
\put(50,87){{\small $\Phi(\gamma_{g,h}(X))$}}
\put(167,87){{\small $\beta_{h}(X^{[g]})$}}
\put(0,20){$\Phi(X)^{[gh]}$}
\put(235,20){$\Phi(X)^{[g][h]}$}
\put(15,70){\vector(0,-1){40}}
\put(245,70){\vector(0,-1){40}}
\put(20,50){{\small $\beta_{gh}(X)$}}
\put(203,50){{\small $\beta_{g}(X)^{[h]}$}}
\put(225,23){\vector(-1,0){178}}
\put(100,30){{\small $\gamma_{g,h}(\Phi (X))$}}
\end{picture}

The top left square is commutative because $X$ is equivariant. The top right square and the bottom pentagon are commutative
because $\Phi$ is a $G$-functor.
Thus, the whole diagram is commutative for all $g,h\in G$. It remains to notice that the outer edges of the diagram read off
the equivariance condition for $\Phi (X)$.
%$\Box$
\end{proof}

We say that two $G$-algebras $A$ and $B$ are {\em $G$-Morita equivalent} 
if %the categories $A-\Mod$ and $B-\Mod$ are equivalent as $G$-categories, in other words,
there exists a $G$-equivalence
%\linebreak
$\Phi : A-\Mod \rightarrow B-\Mod$. 
%such that the functors $\Phi \circ [g]$ and $[g] \circ \Phi$
%are isomorphic for every $g\in G$.
We say that a Morita context $(A,B, \,_AM_B, \,_BN_A, \phi, \psi)$ is {\em nondegenerate}
if $\phi$ and $\psi$ are isomorphisms.
We say it
is {\em $G$-equivariant}
if  
\begin{mylist}
\item[(1)] both $M$ and $N$ are $G$-modules,
\item[(2)] $g\cdot (amb) = ( g\cdot a)(g \cdot m)(g\cdot b)$
for all $a\in A$, $b\in B$, $g\in G$, $m\in M$,
\item[(3)] 
$g\cdot (bna) = ( g\cdot b)(g \cdot n)(g\cdot a)$ for all $a\in A$, $b\in B$, $g\in G$, $n\in N$,
\item[(4)] the bimodule maps
$\phi : M\otimes_BN \rightarrow A$ and $\psi : N\otimes_AM \rightarrow B$ 
are homomorphisms of $G$-modules.
\end{mylist}
The following theorem characterises $G$-Morita equivalences within the context of Morita theory.

\begin{theorem}
\label{G_morita}
Associative $G$-algebras
$A$ and $B$ are $G$-Morita equivalent if and only if there exists a nondegenerate $G$-equivariant 
Morita context $(A,B, \,_AM_B, \,_BN_A, \phi, \psi)$.
%such that $M$ and $N$ are $G$-modules and $\phi$ and $\psi$ are maps of $G$-modules.
\end{theorem}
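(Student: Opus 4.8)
The plan is to lift the classical Morita correspondence --- between equivalences $A-\Mod\xrightarrow{\ \sim\ }B-\Mod$ and nondegenerate Morita contexts --- to the equivariant setting by keeping track of the $G$-data on both sides. The one structural observation needed is that a $G$-algebra $A$, viewed as a left module over itself, is a $G$-equivariant object of $A-\Mod$: the isomorphisms $\alpha_g\colon A\to A^{[g]}$ are the automorphisms $a\mapsto g\cdot a$, which are $A$-module maps precisely because $g$ acts by ring automorphisms, and the cocycle identity for $\{\alpha_g\}$ is the associativity of the $G$-action. By Lemma~\ref{G-eq} equivariant objects of $A-\Mod$ are the same as $A\ast G$-modules, so this also puts a $G$-action on any module of the form $\Phi(A)$.

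For the ``if'' direction, begin with a nondegenerate $G$-equivariant context $(A,B,\,_AM_B,\,_BN_A,\phi,\psi)$. Ordinary Morita theory gives mutually inverse equivalences $\Phi=N\otimes_A-$ and $\Psi=M\otimes_B-$, and the task is to make $\Phi$ a $G$-functor. I would take the comparison maps $\beta_g\colon\Phi(X^{[g]})=N\otimes_AX^{[g]}\to(N\otimes_AX)^{[g]}=\Phi(X)^{[g]}$ to be $n\otimes x\mapsto(g\cdot n)\otimes x$, using the $G$-action on $N$. Axiom~(3) is what makes this well defined and $B$-linear for the twisted actions: the tensor relation $na\otimes x=n\otimes(g\cdot a)x$ in the source maps to the relation $(g\cdot n)(g\cdot a)\otimes x=(g\cdot n)\otimes(g\cdot a)x$ in the target (case $b=1$ of the axiom), and left $B$-linearity is the case $a=1$. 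Naturality in $X$ is automatic, and since $\gamma_{g,h}$ is the identity for module categories the square and pentagon of a $G$-functor reduce to functoriality of $N\otimes_A-$ and to associativity of the $G$-action on $N$. Thus $\Phi$ is a $G$-equivalence.

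For the ``only if'' direction, let $\Phi\colon A-\Mod\to B-\Mod$ be a $G$-equivalence. Put $N=\Phi(A)$ and $M=\Psi(B)$ --- equivalently $M=\Hom_A(N_A,A_A)$. Ordinary Morita theory makes $N$ a $(B,A)$-bimodule, with right action $n\cdot a=\Phi(\rho_a)(n)$ for $\rho_a\colon A\to A$ right multiplication by $a$, and supplies bimodule isomorphisms $\phi\colon M\otimes_BN\to A$ and $\psi\colon N\otimes_AM\to B$ satisfying the associativity axioms. By Lemma~\ref{G-eq_func} applied to the equivariant object $A$, the module $N=\Phi(A)$ is $G$-equivariant in $B-\Mod$, hence (Lemma~\ref{G-eq}) carries a $G$-action with $g\cdot(bn)=(g\cdot b)(g\cdot n)$; compatibility with the right $A$-action, $g\cdot(na)=(g\cdot n)(g\cdot a)$, follows from the naturality square of $\beta_g$ applied to $\rho_a$ together with $\alpha_g\circ\rho_a=\rho_{g\cdot a}\circ\alpha_g$. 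This is axioms~(1)--(3) for $N$, and the same recipe applied to $B$ and $\Psi$ (which acquires a $G$-functor structure, with $G$-natural unit and counit, by transport along the equivalence) gives them for $M$. For axiom~(4) one observes that the canonical isomorphism $N\otimes_A-\xrightarrow{\ \sim\ }\Phi$ is an isomorphism of $G$-functors, so that $\psi$, being the composite $N\otimes_AM=N\otimes_A\Psi(B)\xrightarrow{\ \sim\ }\Phi\Psi(B)\xrightarrow{\ \sim\ }B$ of $G$-equivariant maps, is a $G$-module homomorphism; with the model $M=\Hom_A(N_A,A_A)$ and its induced $G$-action $(g\cdot f)(n)=g\cdot f(g^{-1}\cdot n)$, the maps $\phi,\psi$ are simply evaluation and this is transparent.

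The main obstacle is not any single computation but the bookkeeping in the ``only if'' direction: one must verify that the comparison isomorphism $N\otimes_A-\cong\Phi$ and the unit and counit of the chosen adjoint equivalence are compatible with all the $\beta_g$ and $\gamma_{g,h}$, since it is precisely this that forces $\phi$ and $\psi$ to be $G$-equivariant. Everything else --- well-definedness and linearity of $\beta_g$, the module axioms, the square, the reduction of the pentagon --- is routine diagram chasing once the handedness conventions in $[g]\circ[h]\cong[hg]$ are fixed at the outset.
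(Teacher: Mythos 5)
Your proof is correct and follows essentially the same route as the paper: the same structure maps $\beta_g(P)\colon n\otimes p\mapsto (g\cdot n)\otimes p$ in the "if" direction, and in the "only if" direction the same choice $N=\Phi(A)$, $M=\Psi(B)$ with Lemmas~\ref{G-eq} and \ref{G-eq_func} giving the $G$-action, the naturality square for right multiplication $R_a$ giving $g\cdot(na)=(g\cdot n)(g\cdot a)$, and $\phi,\psi$ inheriting $G$-equivariance from the comparison isomorphisms. The extra details you supply (well-definedness of $\beta_g$, equivariance of $A$ itself, the $G$-structure on the quasi-inverse) are points the paper leaves implicit, so no essential difference.
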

%{\bf Proof.}
\begin{proof}
A nondegenerate $G$-equivariant context gives a $G$-equivalence $\Phi : A-\Mod \rightarrow B-\Mod$
by
$\Phi (P) = N\otimes_AP$ with an inverse equivalence $T \mapsto M\otimes_B T$.
The equivariant structure on $\Phi$ is given by 
$$
N\otimes_AP^{[g]}
\rightarrow
(N\otimes_AP)^{[g]}\, , \ \ \ 
n\otimes p \mapsto
g\cdot n \otimes p
.
$$
Commutativity of the squares and the pentagons is obvious.

In the opposite direction, let $\Phi: A-\Mod\rightarrow B-\Mod$ be a $G$-equivalence 
and
$\Psi: B-\Mod\rightarrow A-\Mod$
its inverse $G$-equivalence.
Out of this one derives a standard nondegenerate Morita context:
$N=\Phi (A)$, 
$M=\Psi (B)$.
As $A$ and $B$ are progenerators, the functor $\Phi$ is naturally isomorphic to $N\otimes_A$
and $\Psi$ is naturally isomorphic to $M\otimes_B$.
The isomorphisms
$\phi : M\otimes_BN \cong \Psi (\Phi (A)) \rightarrow A$ and 
$\psi : N\otimes_AM \cong \Phi (\Psi (B)) \rightarrow B$ 
come from the natural isomorphisms.

It remains to see through the action of $G$.
The object $N=\Phi (A)$ is $G$-equivariant 
by Lemma~\ref{G-eq_func}, i.e., it is naturally a $B*G$-module by
Lemma~\ref{G-eq}.
Thus, $g\cdot (bn) = ( g\cdot b)(g \cdot n)$
for all $b\in B$, $g\in G$, $n\in N$.
Since $\Phi$ is an equivalence of categories, $\End_B(N)\stackrel{\Phi}{\cong} \End_A(A)\cong A$,
and $N$ is a $B$-$A$-bimodule. Finally, the property $g\cdot (na) = ( g\cdot n)(g \cdot a)$
for all $a\in A$, $g\in G$, $n\in N$ follows from the same property for $A$. 
To prove this, observe that  if $R_a$ is a right multiplication by $a$ then the property for $A$ manifests
in the diagram
$$
\begin{CD} 
{A} @> {\alpha_{g}} >> {A} \\ 
@V{R_a} VV @VV {R_{g(a)}} V \\ 
{A} @> {\alpha_{g}} >> {A} \\ 
\end{CD} 
$$
being commutative (N.B., $A^{[g]}=A$). Applying $\Phi$ gives commutativity of the left square in the diagram
$$
\begin{CD} 
{N} @> {\Phi(\alpha_{a})} >> {N} @> {\beta_{g}(N)} >> {N} \\ 
@V{R_a} VV @V{R_{g(a)}} VV @VV {R_{g(a)}} V \\ 
{N} @> {\Phi(\alpha_{g(a)})} >> {N} @> {\beta_{g(a)}(N)} >> {N} \\ 
\end{CD} 
$$
(N.B., $\Phi (R_a)=R_a=R_a^{[g]}$). The right square is commutative by the definition of a $G$-functor.
Thus, the whole diagram is commutative that manifests in $g\cdot (na) = ( g\cdot n)(g \cdot a)$
for all $a\in A$, $g\in G$, $n\in N$.

Similarly, $M=\Psi (B)$ is an $A$-$B$-module with a compatible action of $G$.
The bimodule isomorphisms
$\phi : M\otimes_BN \rightarrow A$ 
and $\psi : N\otimes_AM \rightarrow B$  
come from the isomorphisms $\Psi (\Phi (A))\cong A$ and $\Phi (\Psi (B))\cong B$.
The latter are isomorphisms of $G$-modules. Hence, so are $\phi$ and $\psi$.
%$\Box$
\end{proof}

%{\bf Do we need $|G$ not to be divisible by $p$ in the next theorem?}

Every $G$-algebra $A$ over $\bK$ 
admits a canonical $\mu_\bK$-decorated set $\Irr (A)$ of isomorphism classes
of absolutely simple $A$-modules. 
Recall that a simple $A$-module $M$ is absolutely simple if $\End_A(M)=\bK$.
The (left) action of $G$ on $\Irr (A)$ comes from the (right) action 
on the category $A-\Mod$:
$g\cdot [M] = [M^{[g^{-1}]}]$.

Let us observe the cocycle.
Let $G_M$ be the stabiliser of $[M]\in\Irr (A)$, $a\in \Hom_\bK (A\otimes M, M)$
the $A$-action on $M$.
Since $G_M$ does not change the isomorphism class of the module, $G_Ma \subseteq \GL (M)a$.
The stabiliser of $a$ in $\GL(M)$ is the group of module automorphisms of $M$, 
which is $\bK^\times$ since $M$ is absolutely irreducible.
Hence, $X \mapsto X\cdot a$ is a bijection from the group $\PGL(M)$ to the orbit $\GL(M)a$.
Thus, $g\mapsto g^{-1}\cdot a$ defines a natural function
$\phi_M: G_M \rightarrow \PGL (M)$. 
This function is a group homomorphism
because the actions of $G_M$ and $\PGL(M)$ commute.
Indeed, the action of $G_M$ factors through
$\GL (A)$, while $\GL (A)$ and $\PGL (M)$ act on the different tensor components of $\Hom_\bK (A\otimes M, M)$. Hence,
$$
\phi (gh) \cdot a = (gh)^{-1} \cdot a = h^{-1} \cdot (g^{-1}\cdot a) = h^{-1} \cdot (\phi (g) \cdot a) = \phi (g) \cdot ( h^{-1} \cdot a) = \phi (g) \phi(h) \cdot a.
$$
The obstruction to lifting of $\phi_M$ to a homomorphism $G_M \rightarrow \GL (M)$
is a cocycle $\theta_M \in Z^2(G_M, \bK^\times)$, well defined up to a coboundary.
Thus, the frill $\pi_M := [\theta_M] \in \mu_\bK (G_M)$ and
$\Irr (A)$ is a $\mu_K$-decorated $G$-set, albeit it does not have to be finite for an arbitrary $A$.

\begin{theorem}
\label{morita_decor}
The function $\Upsilon ([A]) = [\Irr (A)]$ is a bijection from 
the set of $G$-Morita equivalence classes of semisimple split $G$-algebras to
the set of isomorphism classes of finite $\mu_\bK$-decorated $G$-sets.
Moreover, using the multiplication in $\bBM (G)$,
$$
\Upsilon ([A\otimes B]) = \Upsilon ([A])\Upsilon([B]),
\ 
\Upsilon ([A\oplus B]) = \Upsilon ([A])+ \Upsilon([B])
\ \mbox{ and } \
\Upsilon ([A^{op}]) = 
\Upsilon ([A])^\vee .
$$ 
for all semisimple split $G$-algebras $A$ and $B$.
%If the field is algebraically closed
%\begin{mylist}
%\item[] Equivalence classes of $G$-module categories.
%\end{mylist}
\end{theorem}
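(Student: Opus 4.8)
The plan is to prove the four assertions in order --- $\Upsilon$ is constant on $G$-Morita classes, surjective, injective, and compatible with $\otimes$, $\oplus$ and ${}^{op}$ --- routing everything through Theorem~\ref{G_morita} and the translation between module categories and decorated $G$-sets. The hard part will be the very first step, the $G$-Morita invariance of the frill $\pi_M\in\mu_\bK(G_M)$; the remaining steps are essentially bookkeeping.

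\emph{$\Upsilon$ is well defined.} If $A$ and $B$ are $G$-Morita equivalent, by Theorem~\ref{G_morita} the $G$-equivalence may be taken of the form $\Phi=N\otimes_A-:A-\Mod\to B-\Mod$; it is $\bK$-linear, sends simple objects bijectively to simple objects, and satisfies $\End_B(\Phi M)\cong\End_A(M)$, so it restricts to a bijection $\Irr(A)\to\Irr(B)$. The isomorphisms $\beta_g(M):\Phi(M^{[g]})\to\Phi(M)^{[g]}$ make this bijection $G$-equivariant and force $G_M=G_{\Phi M}$. For the frills, fix $A$-isomorphisms $\alpha_g:M\to M^{[g]}$ for $g\in G_M$; since $\End_A(M)=\bK$ there is a cocycle $\theta_M\in Z^2(G_M,\bK^\times)$ with $\gamma_{g,h}(M)\circ\alpha_g^{[h]}\circ\alpha_h=\theta_M(g,h)\,\alpha_{gh}$, and $\pi_M=[\theta_M]$. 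Taking $\alpha'_g:=\beta_g(M)\circ\Phi(\alpha_g)$ as the system for $\Phi M$, the same diagram as in the proof of Lemma~\ref{G-eq_func}, now with the maps $\alpha_g$ available only for $g\in G_M$ and composing up to the scalar $\theta_M$, and with that scalar pulled through the $\bK$-linear $\Phi$, gives $\gamma_{g,h}(\Phi M)\circ(\alpha'_g)^{[h]}\circ\alpha'_h=\theta_M(g,h)\,\alpha'_{gh}$; hence $\pi_{\Phi M}=\pi_M$ and $[\Irr(A)]=[\Irr(B)]$.

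\emph{Surjectivity and injectivity.} Using additivity (below) it suffices to treat a single orbit $G/H$ with frill $[\theta]$, $\theta\in Z^2(H,\bK^\times)$. Choose any $\theta$-projective representation $\rho:H\to\GL(V)$ on a finite-dimensional $\bK$-space (for instance $V=\bK_\theta[H]$ with left multiplication always works), and set $A_{G/H,[\theta]}:=\bigoplus_{gH\in G/H}\End_\bK(V)$ with $G$ permuting the summands and $H$ acting on the distinguished one by conjugation by $\rho$. As a product of matrix algebras this is semisimple and split; its simple modules form the single orbit $G/H$; and the obstruction to lifting $\phi_V:H\to\PGL(V)$ to $\GL(V)$ is exactly $[\theta]$, so $\Upsilon([A_{G/H,[\theta]}])$ is the prescribed orbit-with-frill, giving surjectivity. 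For injectivity, an arbitrary semisimple split $G$-algebra decomposes $G$-equivariantly as a direct sum, over the orbits of $\Irr(A)$, of blocks $G$-isomorphic to $\mathrm{Ind}_{G_M}^G\End_\bK(M)$ with $G_M$ acting by conjugation by a projective representation $\rho_M$ realising $\pi_M$ (Skolem--Noether); so given $\Irr(A)\cong\Irr(B)$ it is enough to show $\mathrm{Ind}_H^G\End_\bK(M)$ and $\mathrm{Ind}_H^G\End_\bK(N)$ are $G$-Morita equivalent whenever $M$ and $N$ carry projective representations of $H$ with cohomologous cocycles. Rescaling $\rho_M$ by a suitable $\bK^\times$-valued function we may take $\theta_M=\theta_N$; then $\Hom_\bK(N,M)$, with $H$ acting by $g\cdot f=\rho_M(g)\,f\,\rho_N(g)^{-1}$ (the two copies of $\theta$ cancel, so this is a genuine action), is an $H$-equivariant Morita bimodule between $\End_\bK(M)$ and $\End_\bK(N)$, with equivariant dual $\Hom_\bK(M,N)$ and equivariant composition maps; inducing to $G$ and assembling over orbits yields a nondegenerate $G$-equivariant Morita context, so $A$ and $B$ are $G$-Morita equivalent by Theorem~\ref{G_morita}.

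\emph{The structural identities.} Additivity: a simple $A\oplus B$-module is a simple $A$-module or a simple $B$-module, and $G$-action and frills restrict, so $\Irr(A\oplus B)=\Irr(A)\amalg\Irr(B)$, which is the sum in $\bBM(G)$. Tensor product (with diagonal $G$-action): splitness gives $\End_{A\otimes B}(M\otimes_\bK N)=\End_A(M)\otimes\End_B(N)=\bK$ and $A\otimes B=\bigoplus\End_\bK(M\otimes N)$, so $\Irr(A\otimes B)=\Irr(A)\times\Irr(B)$; since $(M\otimes N)^{[g]}=M^{[g]}\otimes N^{[g]}$ and factorisation of split simples is unique, the stabiliser of $[M\otimes N]$ is $G_M\cap G_N$, while $\rho_M\otimes\rho_N$ restricted to $G_M\cap G_N$ realises the frill of $[M\otimes N]$ as the product of the restrictions of $\pi_M$ and $\pi_N$. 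Writing $(G/G_M)\times(G/G_N)=\coprod_{G_M x G_N}G/(G_M\cap xG_Nx^{-1})$ and tracking frills along this double-coset decomposition reproduces exactly the product defining multiplication in $\bBM(G)$ --- the combinatorics of the product of $\Phi$-decorated sets already used in Section~\ref{s:intro}. Involution: $[M]\mapsto[M^*]$ is a $G$-equivariant bijection $\Irr(A)\to\Irr(A^{op})$ because $(M^{[g]})^*\cong(M^*)^{[g]}$ as right $A$-modules, and the contragredient $g\mapsto(\rho_M(g)^{-1})^{\mathsf{T}}$ is a $\theta_M^{-1}$-projective representation realising the $G_M$-action on $[M^*]$; hence $\pi_{M^*}=\pi_M^{-1}$ and $\Irr(A^{op})=\Irr(A)^\vee$. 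Returning to the obstacle flagged at the outset: the frill-invariance requires carrying the coherence data $\gamma_{g,h}$ and $\beta_g$ through a $G$-equivalence and using $\End_A(M)=\bK$ together with $\bK$-linearity to isolate the scalar cocycle, but since the diagram needed is essentially the one already drawn for Lemma~\ref{G-eq_func}, this is manageable.
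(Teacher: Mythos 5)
Your proposal is correct and follows essentially the same route as the paper: the surjectivity construction (the algebra $\bigoplus_{gH}\End_\bK(V)$ built from a projective representation affording the frill, induced along a transversal) is exactly the paper's description of $\Upsilon^{-1}$, and the treatment of $\oplus$, $\otimes$ and the involution via dual modules matches the paper's. The main difference is one of detail rather than method: you explicitly verify $G$-Morita invariance of the frills (via the Lemma~\ref{G-eq_func}-style coherence diagram) and injectivity (via explicit $G$-equivariant Morita contexts and Theorem~\ref{G_morita}), points the paper only asserts through its remark that different choices of $X_0$, $T_m$ or $V_m$ change the algebra by isomorphism or $G$-Morita equivalence.
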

%{\bf Proof.}
\begin{proof}
To prove bijectivity 
we describe the inverse function $\Upsilon^{-1}$.
Let $X$ be a finite $\mu_\bK$-decorated $G$-set,
$X_0\subseteq X$ a set of representatives of $G$-orbits.
For each point $m\in X_0$ 
let us choose an irreducible projective representation $V_m$ of $G_m$
that affords the frill $\pi_m$.
Let $T_m$ be the right transversal of $G_m$ in $G$.
Now, for each $x\in X$ there exist unique $m\in X_0$, $g\in T_m$ such that
$x=g\cdot m$.
We define a projective representation $V_x$ of $G_x$ by
$$
V_x = V_m, \ \ h\cdot v := (g^{-1}hg)\cdot v, \ \ \forall h\in G_x, \ v\in V_x=V_m.
$$
The collection $\sV=(V_x,x\in X)$ of vector spaces is a $G$-equivariant vector bundle on $X$ \cite{Bezruk01}.
In plain terms, it means that there are linear maps 
$
\Theta_x (g): V_x \rightarrow V_{g\cdot x} %G\rightarrow \End_\bK (\oplus_{x\in X}V_x), \ \ \Theta (g) (v_x) = 
$
for all $g\in G$, $x\in X$ such that
$\Theta_{gx} (h)\Theta_x (g)=\Theta_x (hg)$
and
$\Theta_x (1)=I_{V_x}$.
To see them, observe a bijection between $\sV$ and the fibre product
$$
\coprod_{m\in X_0} G\times_{G_m} V_m :=
\coprod_{m\in X_0} G\times V_m \Big/_\sim 
\stackrel{\cong}{\longrightarrow} \sV
$$
where $(g,v)\sim (g^\prime, v^\prime)$ if and only if they are in the same $G\times V_m$ and
there exists
$h\in G_m$ such that $g^\prime = gh$, $v^\prime = h^{-1}v$.
Now $\Theta_x (g) ([h,v]) = [gh,v]$.
Using this, we can construct a semisimple split $G$-algebra 
$$
A:= \oplus_{x\in X} \End_\bK (M_x) %\subseteq \End_\bK (\oplus_{x\in X}V_x)
, \ \ g\cdot (\alpha_x) = ( \Theta_x(g) \alpha_x \Theta_{gx}(g^{-1}))
$$
with $\Irr (A)$ isomorphic to $X$ as $\mu_\bK$-decorated $G$-sets.
Notice that the different choice of $X_0$ or one of $T_m$ will lead to an isomorphic algebra,
while a different choice of one of $V_m$ will lead to a $G$-Morita equivalent algebra. Thus $\Upsilon$ is a bijection.

The first two properties of $\Upsilon$ are immediate.
The last property follows from the fact that the simple $A^{op}$-modules
are the dual spaces $M^*$ of simple $A$-modules $M$. The cocycle of $G_M$-action
on $M^\ast$ is $\pi_M^{-1}$.
\end{proof}
%$\Box$

Theorem~\ref{morita_decor}
gives a new presentation of the Burnside ring $\bBM_R(G)$. As a left $R$-module it is generated
by $G$-Morita equivalence classes of semisimple split $G$-algebras subject to relations
$$
[A\oplus B] = [A]  +   [B]
$$
while the multiplication is given by the rule
$$
[A]\cdot [B] = [A\otimes B] .
$$

We finish this section outlining the role of generalised Burnside rings in number theory.
A similar construction for  the usual Burnside rings %by Dockchitsers \cite{DoDo}.
%Dockchitser brothers 
have  recently been used by Dockchitsers to prove a partial case of the parity conjectures \cite{DoDo}.
%They used usual Burnside rings $\bU (G)$ of a finite group $G$. Let us consider a $G$-Galois extension $\bQ \leq \bF$.
%For each subgroup $H\leq G$, we consider Dedekind zeta function $\zeta_{\bQ^H} (z) \in \bM (z)$. 
%This can be uniquely extended to an abelian group homomorphism $\zeta : \bU (G) \rightarrow \bM (z)^\times$ to the multiplicative group
%of nonzero meromorphic functions. This homomorphism can be factored through the Grothendieck ring of $G$:
%$\zeta = L \circ P$ where  $P: \bU (G) \rightarrow K(G)$ assigns a permutation representation to each $G$-set, while
%$L: K(G) \rightarrow \bM (z)^\times$ assigns Artin's $L$-function to each representation. This factorisation manifests in the well-known factorisation
%of a zeta-function into a product of $L$-function.
%
%Dockchitsers are interested in regulator constants rather than zeta functions. They also think of regulator constants as functions on $\bU (G)$, which no longer factor through  $P: \bU (G) \rightarrow K(G)$ but rather behave in a certain prescribed way on the kernel of $P$. Our aim in this section is to demonstrate significance of extended sets and the generalised Burnside rings. 

Let $\bF \leq \bK$ be a $G$-Galois extension of algebraic number fields. 
Let us consider 
a central simple $n^2$-dimensional algebra $S$ over $\bK^H$, split over $\bK$,
where $H$ is a subgroup of $G$.
The algebra $S$
is uniquely determined up to an isomorphism 
by its system of factors 
$\alpha_S \in H^1 (H, \PGL_n (\bK))$.
The long exact sequence in nonabelian cohomology gives 
an embedding $H^1 (H, {\PGL_n} (\bK)) \hookrightarrow H^2 (H, \bK^\times)$. 
Thus, we can think that  $\alpha_S \in H^2 (H, \bK^\times)$.
Then nonisomorphic algebras $S$ can have the same $\alpha_S$. 
By Artin-Wedderburn's theorem, $S\cong M_k (D_S)$ where $D_s$ is a simple central division algebra.
Then $\alpha_S = \alpha_T$ if and only if $D_S \cong D_T$. 

%Let $\Phi (H) = H^2 (H, \bF^\times)$. 

Now
we can interpret  $\langle a,H\rangle \in \bBM (G)$ 
as a Morita equivalence class $[A]$ of a simple $\bK^H$-algebra $A$ split over $\bK$ with $\alpha_A = a$.
This class contains a unique (up to an isomorphism) division algebra $D$,
so $\langle a,H\rangle \in \bBM (G)$ can also
be
interpreted as 
an isomorphism class $[D]$ of division $\bK^H$-algebras, split over $\bK$ with $\alpha_D = a$ 

Now the extended Burnside ring will play the same role for the study 
of central simple algebras as the usual Burnside ring plays for the study
of fields: various number theoretic concepts become group homomorphisms 
from $\bBM (G)$ to abelian groups \cite{DoDo}. For instance, 
a zeta function $\zeta_D (z)$ of a division algebra $D$
extends to a group homomorphism to the meromorphic functions
$\zeta : \bBM(G) \rightarrow \bM (z)$: 
on basis elements $\zeta (\langle a,H\rangle ) = \zeta_D (z)$ where
$D$ is the division central $\bK^H$-algebra, split over $\bK$ with $\alpha_D = a$.

\section{Groupoids and $\mu_\bK$-decorated sets}

%Let $\bK$ be a field admitting $|G|$-th primitive root of unity. In particular,
%its characteristic $p$ does not divide $|G|$.
Over a field $\bK$, there is a bijection between
elements of $\mu_\bK (G)$ and isomorphism classes of central extensions
$$
1 \rightarrow
\bK^\times \rightarrow
\widetilde{G} \rightarrow
G \rightarrow 1 .
$$
The goal of this section is to observe
that
$\mu_\bK$-decorated sets admit a similar interpretation via groupoids.
Any $G$-set $X$ defines the action groupoid $\sG_X=G\times X$ 
over the base $X$.
The maps $\pi_1, \pi_2 : \sG_X \rightarrow X$ are
$\pi_1(g,x)=g\cdot x$ and $\pi_2(g,x)=x$.
The product $(g,x)(h,y)=(gh,y)$ is defined whenever
$\pi_2(g,x)=\pi_1(h,y)$.
A central extension of $\sG_X$ by  $\bK^\times$
is an exact sequence of groupoids
$$
1 \rightarrow
\bK^\times \times \Delta_X \rightarrow
\widetilde{\sG}_X \rightarrow
\sG_X \rightarrow 1 
$$
where $\bK^\times \times \Delta_X$
is 
a trivial groupoid on the diagonal $\Delta_X \subseteq X\times X$ \cite{MirkovicRumynin99}, i.e.,
$\pi_1, \pi_2 : \bK^\times \times \Delta_X \rightarrow \Delta_X$ are both
$\pi_1(g,x,x)=\pi_2(g,x,x)=(x,x)$ and
$(g,x,x)(h,x,x)=(gh,x,x)$.

\begin{lemma}
\label{groupoid}
There are natural bijections between the following sets:
\begin{mylist}
\item[(1)] isomorphism classes of finite $\mu_\bK$-decorated $G$-sets, 
\item[(2)] isomorphism classes of central extensions by $\bK^\times$ of $G$-action groupoids on finite sets.
\end{mylist}
\end{lemma}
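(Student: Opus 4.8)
The plan is to construct the bijection from right to left — from a central extension to a decorated set — by restricting the extension to the automorphism groups of individual points, to check that this restriction respects the equivariance of frills, and then to prove bijectivity orbit by orbit, where the statement reduces to the classical classification of central extensions of a finite group $H$ by $\bK^\times$ in terms of $\mu_\bK(H)=H^2(H,\bK^\times)$. Concretely, I would fix a finite $G$-set $X$ and a central extension $1\to\bK^\times\times\Delta_X\to\widetilde{\sG}_X\to\sG_X\to 1$. For $x\in X$ the automorphisms of the object $x$ in $\sG_X$ are the arrows $\{(g,x):g\in G_x\}$, canonically identified with the stabiliser $G_x$; pulling the extension back along the inclusion of this one-object subgroupoid gives an ordinary central extension $1\to\bK^\times\to\widetilde{G_x}\to G_x\to 1$, hence a class $\pi_x\in\mu_\bK(G_x)$. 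A morphism $(g,x):x\to g\cdot x$ of $\sG_X$ lifts to an invertible arrow of $\widetilde{\sG}_X$, unique up to the central $\bK^\times$; conjugation by such a lift carries $\widetilde{G_x}$ isomorphically onto $\widetilde{G_{g\cdot x}}$ over the conjugation isomorphism $G_x\to G_{g\cdot x}$, and being canonical (the ambiguity in the lift is central) this is exactly the relation $\pi_{g\cdot x}=\Phi(\gamma_g)(\pi_x)$ required of a $\mu_\bK$-decorated set. Since an isomorphism of extensions over $\sG_X$ forces the $\pi_x$ to agree and an isomorphism of $G$-sets induces one of action groupoids, this yields a well-defined map from isomorphism classes in (2) to isomorphism classes in (1).

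To see it is a bijection, I would use that $\sG_X=\coprod_i\sG_{G\cdot x_i}$ over orbit representatives and that both sides split as direct sums over the orbits: the decorated side because the equivariance condition determines $\pi_y$ for $y=g\cdot x_i$ from $\pi_{x_i}$, consistently because $\Phi(\gamma_h)$ for $h\in G_{x_i}$ is conjugation by an element of $G_{x_i}$ and so acts trivially on $H^2(G_{x_i},\bK^\times)$; the extension side because a central extension of a disjoint union of groupoids is the disjoint union of central extensions of the pieces. So it suffices to treat a transitive $X=G/H$, where our map sends $\widetilde{\sG}_{G/H}$ to the class on the vertex group at the base point. Injectivity and surjectivity then follow from the fact that $\sG_{G/H}$ is equivalent as a groupoid to the one-object groupoid with morphism group $H$ (via restriction to the base point), together with the invariance of central-extension classification under groupoid equivalence. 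Surjectivity can also be exhibited directly: a transversal $T$ of $H$ in $G$ gives a cochain $G\times(G/H)\to H$ recording the corrections $gt=t'h$, and pulling a cocycle $\theta\in Z^2(H,\bK^\times)$ back along it defines a twisted product on $\bK^\times\times G\times(G/H)$ realising a central extension of $\sG_{G/H}$ with the prescribed class — the groupoid analogue of the bundle $\sV$ built in the proof of Theorem~\ref{morita_decor} (cf.\ \cite{MirkovicRumynin99}).

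The main obstacle is precisely this transitive case: checking that the twisted product above is associative (this is where the $2$-cocycle identity for $\theta$ is used, after transporting through the transversal), and, more importantly, that the class in $\mu_\bK(H)$ recovered from $\widetilde{\sG}_{G/H}$ is independent of the transversal, the base point, and the chosen representing cocycle — i.e.\ is genuinely an invariant of the isomorphism class of the extension. Granting this, the restriction map is a bijection on each orbit, hence on each finite $G$-set, and summing over isomorphism classes of finite $G$-sets gives the asserted bijection between (1) and (2).
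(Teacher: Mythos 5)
Your proposal is correct and follows essentially the same route as the paper: both identify the class of a central extension of $\sG_X$ with the classes of the induced extensions of the isotropy groups $G_x$, interpret the equivariance of frills via the conjugation isomorphisms coming from lifts of arrows $(g,x)$, and conclude that the two kinds of objects are given by the same data. The paper leaves the orbit-by-orbit well-definedness check to the reader, which is exactly the part you spell out (and correctly flag as the remaining verification) via the transversal/twisted-product construction.
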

%{\bf Proof.}
\begin{proof}
Such central extensions are defined by central extensions of the diagonal groups $\sG_{x,x}=\pi_1^{-1}(x)\cap\pi_2^{-1}(x)$.
These diagonal groups are point stabilisers $G_x$ and their extensions are defined by $\pi_x\in \mu_\bK (G_x)$.

The equivariance assumption on frills is necessary for the existence of the central extension:
each $g\in G$ defines an automorphism of $\widetilde{\sG}$ by
$(h,x) \mapsto (ghg^{-1}, \,^gx)$. This automorphism gives an isomorphism between central extensions of $G_x$ and $G_{g\cdot x}$.
We leave it to the reader to check that the equivariance is 
sufficient for $\widetilde{\sG}$ to be well defined.
%. The extension groupoid has
%the same off-diagonal fibres $\widetilde{\sG}_{x,y}=\sG_{x,y}= \{g\in G | \,^gy=x \}$
%and bigger diagonal fibres $\widetilde{\sG}_{x,x}=\sG_{x,y} \times \bK^\times = G_{x} \times \bK^\times $.
%The multiplication between off-diagonal fibres is the same as in $\sG_X$, while multiplication on the diagonal is
%determined by cocycles $p_x$ representing classes $\pi_x$.
%$(g,x)(h,x)

Thus, central extensions of action groupoids and $\mu_\bK$-decorated sets are defined by the same data, 
so there is an obvious natural isomorphism between the sets of isomorphism
classes of both.
%$\Box$
\end{proof}

Furthermore, it is possible to write a presentation of 
$\bBM (G)$ in the language of central extension groupoids. We leave details to an interested reader.

\section{Module categories and $\mu_\bK$-decorated sets}
\label{sec_mod_cat}

To explain the final (in this paper) interpretation
of the generalised Burnside ring  $\bBM (G)$,
we need to contemplate the relation between a $G$-algebra $A$ and the skew group ring $A\ast G$.
We have already seen that $A-\Mod$ is a $G$-category.
What is about $A\ast G-\Mod$?
It is a (right) module category over $G-\Mod$.
This means there is an exact tensor product bifunctor
$$
\boxtimes: A\ast G-\Mod \  \times \ G-\Mod \rightarrow A\ast G-\Mod
$$
with associativity and unity natural transformations
$$
(M \boxtimes V) \boxtimes V^\prime \stackrel{\cong}{\longrightarrow} M \boxtimes (V \otimes V^\prime ), \ \ \
M \boxtimes \bK \stackrel{\cong}{\longrightarrow} M 
$$
where $\bK$ is the trivial $G$-module subject to the commutativity of the 
pentagon and triangle diagrams 
\cite{EGNO,Ostrik03}.
Both citations are comprehensive sources on module categories.
We will use their terminology and results freely in this section.

The tensor product $M\boxtimes V$ 
of an $A*G$-module $M$ and a $G$-module $V$ is just the usual tensor product $M\otimes V$ of $G$-modules with $A$ acting on the first component.
In fact, $A\ast G-\Mod$ is naturally equivalent (as a module category) to the module category $A-\Mod_G$ \cite{EGNO,Ostrik03}.
To construct the latter, $A$ is considered as an algebra in $G-\Mod$ and $A-\Mod_G$ is the category of $A$-modules in  $G-\Mod$.

Now we indulge in a philosophical digression:
the precise relation  
between $A-\Mod$ and $A\ast G-\Mod$ is of duality.
Lemma~\ref{G-eq} gives an equivalence between $A\ast G-\Mod$ and the category of equivariant objects in $A-\Mod$ with fixed equivariant structures.
The Cohen-Montgomery duality for actions tells us that $(A*G)\# (\bK G)^* \cong M_n(A)$
where $n$ is the order of $G$
\cite{CoMo}.
Thus, $A-\Mod$ is equivalent to $(A*G)\# (\bK G)^* -\Mod$ which is the category of $G$-graded $A*G$-modules.
%The duality assertion is further substantiated in 
%Thus, a $G$-equivalence between $A-\Mod$ and $B-\Mod$ induces an equivalence between 
%$A\ast G-\Mod$
%and
%$B\ast G-\Mod$.
%Lemma~\ref{duality}.

%Let us see it through in the context of Morita theory (cf. Theorem~\ref{G_morita}).
%We say that a functor $F:\sM\rightarrow\sN$ between module categories is a module functor if
%it is equipped with a natural isomorphism 
%$\gamma : F\circ \boxtimes_\sM \rightarrow \boxtimes_\sN \circ F$ of bifunctors  $\sM\times G-\Mod\rightarrow\sN$. 
%The categories are equivalent as module categories if there exists a module functor which is an equivalence.

\begin{lemma}
\label{AG_morita}
Let $A$ and $B$ be associative $G$-algebras.
The categories $A\ast G-\Mod$
and
$B\ast G-\Mod$
are equivalent as module categories over $G-\Mod$
if and only if there exists a nondegenerate $G$-equivariant 
Morita context $(A,B, \,_AM_B, \,_BN_A, \phi, \psi)$.
\end{lemma}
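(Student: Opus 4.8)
The plan is to deduce the lemma from Theorem~\ref{G_morita} together with the standard dictionary between $G$-categories and module categories over $G-\Mod$. By Lemma~\ref{G-eq}, $A\ast G-\Mod$ is the category of $G$-equivariant objects of $A-\Mod$, that is, the equivariantization $(A-\Mod)^G$, and similarly for $B$; so the content of the lemma is the assertion that $(A-\Mod)^G$ and $(B-\Mod)^G$ are equivalent as $G-\Mod$-module categories if and only if $A-\Mod$ and $B-\Mod$ are $G$-equivalent. Granting this reformulation, Theorem~\ref{G_morita} produces the promised nondegenerate $G$-equivariant Morita context from a $G$-equivalence in one direction and consumes such a context in the other, and nothing further is required.

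For ``$G$-equivariant Morita context $\Rightarrow$ module equivalence'' I would argue explicitly. Theorem~\ref{G_morita} already gives the $G$-equivalence $\Phi=N\otimes_A(-)\colon A-\Mod\to B-\Mod$. I extend it to $\widetilde\Phi\colon A\ast G-\Mod\to B\ast G-\Mod$ by $\widetilde\Phi(M)=N\otimes_A M$, with $B$ acting through $N$ and $G$ acting diagonally, $g\cdot(n\otimes m)=(g\cdot n)\otimes(g\cdot m)$. Axioms (1)--(3) of a $G$-equivariant context make this well defined over $\otimes_A$ and make the relation $gb=g(b)g$ of $B\ast G$ hold on $\widetilde\Phi(M)$, so the result is a genuine $B\ast G$-module. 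The module-functor structure on $\widetilde\Phi$ is the obvious associativity isomorphism $(N\otimes_A M)\otimes_\bK V\xrightarrow{\sim} N\otimes_A(M\otimes_\bK V)$ for $V\in G-\Mod$; checking that it is $B$-linear, $G$-equivariant, and compatible with the pentagon and triangle is routine. Finally $\widetilde\Phi$ is an equivalence with quasi-inverse $M\otimes_B(-)$, the unit and counit being induced by $\phi$ and $\psi$, whose being maps of $G$-modules (axiom (4)) is exactly what makes them isomorphisms of $A\ast G$- and $B\ast G$-modules. (Abstractly, this is just the statement that equivariantization sends a $G$-equivalence to a module equivalence.)

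For the converse I would de-equivariantize. As recalled in the text, $A\ast G-\Mod\simeq A-\Mod_G$, the category of $A$-modules in $G-\Mod$. Let $\mathcal{O}=\Fun(G)$ be the algebra of $\bK$-valued functions on $G$, with pointwise multiplication and $G$ acting by translation; it is a separable commutative algebra in $G-\Mod$, and modules over $\mathcal{O}$ in $G-\Mod$ are exactly modules over $\mathcal{O}\rtimes G$, which is Morita equivalent to $\bK$ by the $A=\bK$ case of the Cohen--Montgomery isomorphism recalled above — so no hypothesis on the characteristic of $\bK$ is needed. Consequently the category of $\mathcal{O}$-module objects inside $A-\Mod_G$ is $(A\otimes\mathcal{O})-\Mod_G\simeq A-\Mod$, and it inherits a residual $G$-action, which coincides with the one on $A-\Mod$ used throughout \cite{EGNO,CoMo}. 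A $G-\Mod$-module equivalence $F\colon A\ast G-\Mod\to B\ast G-\Mod$ carries $\mathcal{O}$-module objects to $\mathcal{O}$-module objects, hence descends to an equivalence $A-\Mod\to B-\Mod$ intertwining the residual $G$-actions, i.e.\ to a $G$-equivalence; Theorem~\ref{G_morita} then delivers the desired context. Alternatively one may invoke the general biequivalence between $G$-categories and their equivariantizations, regarded as module categories over $G-\Mod$ \cite{EGNO,Ostrik03}, together with $(A-\Mod)^G\simeq A\ast G-\Mod$ from Lemma~\ref{G-eq}.

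The step I expect to be the main obstacle is this de-equivariantization: showing that $A-\Mod$, together with its $G$-action, is reconstructed intrinsically from the $G-\Mod$-module category $A\ast G-\Mod$, and that the reconstruction is functorial enough that a module equivalence descends to a $G$-equivalence. Everything else reduces either to a direct citation of Theorem~\ref{G_morita} or to routine bookkeeping with $\otimes_A$, $\otimes_B$ and $\boxtimes$.
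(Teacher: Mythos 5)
Your argument is correct, but for the harder direction it follows a genuinely different route from the paper. The paper's proof is a one-step internalisation: it identifies $A\ast G-\Mod$ with $A-\Mod_G$ (modules over the algebra $A$ inside the tensor category $G-\Mod$), observes that a nondegenerate $G$-equivariant Morita context is precisely a nondegenerate Morita context internal to $G-\Mod$, and then declares the lemma to be the standard Morita theorem run inside $G-\Mod$ -- concretely, the proof of Theorem~\ref{G_morita} repeated with $G-\Mod$ in place of vector spaces and the trivial group acting. You instead keep Theorem~\ref{G_morita} as a black box and pass back and forth between $G$-categories and $G-\Mod$-module categories: your forward direction is an explicit equivariantisation of the equivalence $N\otimes_A(-)$ (essentially the same computation the paper's internal argument would produce), while your converse de-equivariantises via the algebra $\mathcal{O}=\Fun(G)$ in $G-\Mod$, using $\mathcal{O}\ast G\cong M_{|G|}(\bK)$ to recover $A-\Mod$ with its residual $G$-action and then invoking Theorem~\ref{G_morita}. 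The trade-off: the paper's route is short but asks the reader to trust that the earlier Morita argument internalises verbatim; yours avoids redoing that argument but leans on the standard (and correctly flagged by you as the main obstacle) facts that equivariantisation and de-equivariantisation are mutually inverse and functorial for module functors, facts not established in the paper though available in \cite{EGNO}. Both routes work in arbitrary characteristic, since $\Fun(G)$ is separable and $\Fun(G)\rtimes G\cong M_{|G|}(\bK)$ with no restriction on $\bK$; your appeal to Cohen--Montgomery is a slight overkill for that elementary isomorphism but harmless.
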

%{\bf Proof.}
\begin{proof}
%Given a nondegenerate $G$-equivariant 
%Morita context $(A,B, \,_AM_B, \,_BN_A, \phi, \psi)$,
%we consider a functor
%$$
%\Phi = N\otimes_A \ \ : A * G-\Mod \rightarrow B\ast G-\Mod.
%$$
%For an $A\ast G$-module $L$, the tensor product $N\otimes_A L$
%is the quotient of the tensor product $N\otimes_\bK L$ by the subspace spanned by all $na\otimes l - n \otimes al$, $n\in N$, $a\in A$, $l\in L$.
%The standard action of $G$ on $N\otimes_\bK L$ factors to an action on $N\otimes_A L$ because
%$g \cdot (na\otimes l - n \otimes al) = (g\cdot n)(g\cdot a)\otimes (g\cdot l) - (g\cdot n) \otimes (g\cdot a)(g\cdot l)$ for all $g\in G$.
%This action defines a structure of $B*G$-module because $N$ is $G$-equivariant. Now $\Phi$ is a module functor with the trivial module structure:
%$$
%(N\otimes_A L)\boxtimes V \rightarrow N\otimes_A (L\boxtimes V), \ \ \ n\otimes l \otimes v \mapsto n\otimes l \otimes v
%$$
%for a $G$-module $V$.
%The functor $M\otimes_B$ is a quasiinverse of $\Psi$.
%
%In the opposite direction. {\bf IS THERE? DO WE NEED IT??}
The category $A\ast G-\Mod$ is naturally equivalent to $A-\Mod_G$, the category of $A$-modules in  $G-\Mod$.
A nondegenerate $G$-equivariant 
Morita context is just a nondegenerate Morita context in $G-\Mod$.
Thus, the lemma is just a standard Morita theorem stated inside the category $G-\Mod$, for instance, our proof of Theorem~\ref{G_morita}
set in $G-\Mod$ instead of vector space but with the trivial group will do the job. 
%$\Box$
\end{proof}

It is useful to introduce a more intuitive geometric language  \cite{Bezruk01, BezrukFO06}.
We can think of a $\mu_\bK$-decorated $G$-set $X$
as a $G$-Morita equivalence class $[A]$ of split semisimple $G$-algebras over $\bK$.
By Lemma~\ref{AG_morita}, the category $A\ast G-\Mod$ is canonically attached to $X$,
i.e. if $X=[A]$ and $X=[B]$ for different $G$-algebras gives equivalent categories.
We call it {\em the category of $G$-equivariant coherent sheaves on $X$} and denote
$\CG(X)$.
The rank of the Grothendieck group $K(\CG(X))$, equal to the number of irreducible objects in 
$A\ast G-\Mod$, is {\em an equivariant
Euler characteristic} $\mM(X)$ of the $\mu_\bK$-decorated $G$-set. 
This linearly extends to 
a function 
$\mM:\bBM(G)\rightarrow \mathbb{Z}$,
whose values are appended to tables 1 and 2. 

Some of the considerations can be repeated if $G$ is no longer finite but an algebraic group acting on a finite set $X$.
As the stabilisers of points are open, the finite component group $G/G_0$ acts on $X$.
We define a $\mu_\bK$-decorated $G$-set to be just  a $\mu_\bK$-decorated $G/G_0$-set. 
Now the category $A\ast G-\Mod$ consists only of those $A\ast G$-modules that are rational as $G$-modules.
Now Lemma~\ref{AG_morita} can be repeated in $G$-modules and  the category $\CG(X)$ is canonically attached to $X$.

A point $x = [N] \in X$ determines a minimal central idempotents $e_x\in A$ such that $e_x N = N$.
Using it, we define 
a stalk $M_x := e_x M$ and the support $\{x\in X \mid e_x M\neq 0\}$
of a sheaf $M$. This will be used in the next section.

Now we would like to discuss the relation of $\CG(X)$ to the module categories $H-\Mod_\eta$.
If $\eta\in \mu_\bK (H)$ and $H$ is a subgroup of a finite group $G$, the category $H-\Mod_\eta$ 
is the category of projective representations of $H$, affording the cocycle $\eta$
\cite{EGNO,Ostrik03}.

\begin{lemma}
\label{duality}
Let $X$ be a finite $\mu_\bK$-decorated $G$-set,
$G$ a finite group, 
$X_0\subseteq X$ a set of representatives of $G$-orbits.
Then the category $\CG(X)$ is equivalent to
$\oplus_{x\in X_0} G_x-\Mod_{\pi_x^{-1}}$ as a module category. 
\end{lemma}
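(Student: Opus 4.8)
The plan is to reduce $\CG(X)$ to a direct sum over $G$-orbits and then identify the piece sitting over a single orbit $G/G_x$ with $G_x\text{-}\Mod_{\pi_x^{-1}}$. First I would choose a representative $G$-algebra $A$ with $X=[A]$, which exists by Theorem~\ref{morita_decor}; since $\CG(X)$ is defined as $A\ast G\text{-}\Mod$ (equivalently $A\text{-}\Mod_G$) and this is independent of the choice of $A$ by Lemma~\ref{AG_morita}, I am free to pick the explicit model $A=\oplus_{x\in X}\End_\bK(M_x)$ built in the proof of Theorem~\ref{morita_decor}, with its $G$-action through the maps $\Theta_x(g)$. The central idempotents $e_x$ introduced after Lemma~\ref{AG_morita} are permuted by $G$ exactly as the points of $X$, so the decomposition $1=\sum_{m\in X_0}\big(\sum_{x\in G\cdot m}e_x\big)$ into $G$-stable idempotents gives a direct sum decomposition of $A$ as a $G$-algebra into blocks indexed by $X_0$, hence a direct sum decomposition of module categories $\CG(X)\simeq\oplus_{m\in X_0}\CG(G\cdot m)$. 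This decomposition is compatible with the $G\text{-}\Mod$-action (the module structure acts diagonally), so it is an equivalence of module categories.

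So it remains to treat a single transitive orbit $Y=G/G_x$ with frill $\pi_x$; I want $\CG(Y)\simeq G_x\text{-}\Mod_{\pi_x^{-1}}$ as module categories. Here the representing $G$-algebra can be taken to be the "induced" algebra $A_Y=\oplus_{g\in T_x}\End_\bK(V_x)$ from the construction in Theorem~\ref{morita_decor}, where $V_x$ is an irreducible $\pi_x$-projective representation of $G_x$ and $T_x$ a transversal. The key step is a Clifford-theory / base-change identification: the category of $A_Y\ast G$-modules that are induced from $G_x$ is equivalent to modules over the "stalk algebra" $\End_\bK(V_x)\ast G_x$, and by the standard twisted-group-algebra analysis $\End_\bK(V_x)\ast G_x$ is Morita equivalent to the twisted group algebra $\bK_{\theta}G_x$ whose cocycle $\theta$ is the obstruction class governing the $G_x$-action on $V_x$; concretely the action on $\End_\bK(V_x)$ forces the lift of $g\in G_x$ to act as $\Theta_x(g)$, whose composition law $\Theta_x(h)\Theta_x(g)=\Theta_x(hg)$ vs. the representation $V_x$ contributes the cocycle $\pi_x^{-1}$ (dual because one is twisting the algebra, in line with the $\Upsilon([A^{op}])=\Upsilon([A])^\vee$ statement). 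Thus $\CG(Y)\simeq \bK_{\pi_x^{-1}}G_x\text{-}\Mod = G_x\text{-}\Mod_{\pi_x^{-1}}$ as abelian categories.

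Finally I would check this equivalence respects the $G\text{-}\Mod$-module structure: on $\CG(Y)=A_Y\ast G\text{-}\Mod$ the module structure is $-\otimes_\bK V$ with $G$ acting diagonally, while on $G_x\text{-}\Mod_{\pi_x^{-1}}$ the module structure over $G\text{-}\Mod$ is $-\otimes_\bK \Res^G_{G_x}V$; the restriction-induction adjunction (Frobenius reciprocity at the level of module categories, cf.\ \cite{EGNO,Ostrik03}) intertwines these, and the associativity and unit constraints match because they are all built from the obvious tensor-associativity isomorphisms of vector spaces. Summing over $X_0$ then yields the claim. The main obstacle is the middle step: pinning down precisely that the twisting cocycle is $\pi_x^{-1}$ rather than $\pi_x$, and verifying that the stalk-algebra reduction $A_Y\ast G\text{-}\Mod\simeq \End_\bK(V_x)\ast G_x\text{-}\Mod$ is an equivalence \emph{of module categories} and not merely of abelian categories — this requires tracking the $\Theta_x(g)$'s carefully through the induction equivalence, but it is a bookkeeping exercise of the same flavour as the proof of Theorem~\ref{morita_decor}.
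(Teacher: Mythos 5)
Your argument is correct, and it reaches the same destination as the paper by a recognisably different packaging. The paper proves the lemma by writing down an explicit pair of quasi-inverse functors at once: it sends $\oplus_{x\in X_0}V_x$ to $\oplus_{x\in X}M_x\otimes_\bK V_x$ (transversal elements permuting the summands), and in the other direction uses the semisimple decomposition $L=\oplus_{x\in X}M_x\otimes\Hom_A(M_x,L)$, observing that the multiplicity space $\Hom_A(M_x,L)$ carries the cocycle $\pi_x^{-1}$ because the cocycles of $M_x$ and $L$ cancel; this makes your ``$\pi_x^{-1}$ versus $\pi_x$'' worry evaporate in one line. You instead decompose by orbits via the $G$-stable central idempotents and then run a Clifford--Morita reduction on a single orbit: corner down to the stalk algebra $\End_\bK(V_x)\ast G_x$ and untwist it as $\End_\bK(V_x)\otimes\bK_{\pi_x^{-1}}G_x$, which is where the inverse cocycle appears (your computation with the elements $\Theta_x(g)^{-1}g$ is the standard one and does give $\pi_x^{-1}$). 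Two small points of care: your phrase ``the category of $A_Y\ast G$-modules that are induced from $G_x$'' should really be the statement that $e_x$ is a \emph{full} idempotent in $A_Y\ast G$, so that $L\mapsto e_xL$ is an equivalence on the whole category (every equivariant sheaf on a single orbit is induced from its stalk) --- as literally written it sounds like a restriction to a subcategory; and the compatibility with the $G-\Mod$ action is more direct than an appeal to Frobenius reciprocity, since $e_x(L\boxtimes V)=(e_xL)\otimes\Res^G_{G_x}V$ and $\Hom_{\End_\bK(V_x)}(V_x,-)$ commutes with $-\otimes\Res^G_{G_x}V$ on the nose. With those two clarifications your route is complete; what it buys is a reduction to standard skew-group-algebra Morita theory, at the cost of three reduction steps where the paper's direct functor $\oplus_{x\in X_0}\Hom_A(M_x,-)$ handles all orbits and the cocycle bookkeeping simultaneously.
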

\begin{proof}
%{\bf Proof.}
The functor $\Phi : \oplus_{x\in X_0} G_x-\Mod_{\pi_x^{-1}} \rightarrow \CG(X)$
is constructed in two steps. First, we can associate a conjugate projective representation $V_x\in G_x-\Mod_{\pi_x^{-1}}$,  $x\in X$
to a formal sum $\oplus_{x\in X_0} V_{x}$.
It is done exactly as in the proof of Theorem~\ref{morita_decor}.
Now let $M_x$ be the simple $A$-module that corresponds to the point $x\in X$. We define
$$
\Psi (\oplus_{x\in X_0} V_{x}) = \oplus_{x\in X} M_x\otimes_\bK V_{x}
$$
with $A$ acting on the first components. $G_x$ acting on the tensor product $M_x\otimes_\bK V_{x}$ (N.B., the cocycles cancel, so $H_x$ acts linearly)
and elements of the transversal $T_x$ permuting the components in the orbit.

Its quasiinverse functor $\Psi : \CG(X) \rightarrow  \oplus_{x\in X_0} G_x-\Mod_{\pi_x^{-1}} $ is based on the canonical decomposition
$$
L = \oplus_{x\in X} M_x \otimes \Hom_A (M_x, L)
$$
of an $A\ast G$-module $L$ (N.B., $A$ is semisimple). Observe that $L$ is a linear representation of $G$, $M_x$ 
a projective representation of $G_x$ with the cocycle $\pi_x$, so 
$\Hom_A (M_x, L)$ is 
a projective representation of $G_x$ with the cocycle $\pi_x^{-1}$. Thus,
$$
\Psi (L) = \oplus_{x\in X_0} \Hom_A (M_x, L)
$$
is the quasiinverse functor. All the verifications are straightforward.
%$\Box$
\end{proof}

It is interesting that Lemma~\ref{duality}
holds without any assumption on characteristic $p$ of the field $\bK$.
If $p$ does not divide $|G|$ then every indecomposable semisimple module category over $G-\Mod$
is equivalent to  $H-\Mod_{\eta}$ for some $H$, $\eta$
\cite[Th 3.2]{Ostrik03}.
Thus, $\CG(X)$ are all possible semisimple module categories.

Now if $p$ divides $|G|$ then $A\ast G$ can be semisimple or  not semisimple.
However, it is relatively semisimple over $G-\Mod$.
It would be interesting whether $\CG(X)$ constitute all possible relatively
semisimple module categories in this case. We avoid this difficulty by declaring a module category
{\em special} if it is equivalent to a direct sum of $H-\Mod_{\eta}$ as a module category.

\begin{theorem}
\label{main_bijection}
For a finite group $G$ there are natural bijections between the following sets:
\begin{mylist}
\item[(1)] isomorphism classes of finite $\mu_\bK$-decorated $G$-sets, 
\item[(2)] isomorphism classes of central extensions  by $\bK^\times$ of $G$-action groupoids of finite sets,
\item[(3)] $G$-Morita equivalence classes of semisimple split $G$-algebras,
\item[(4)] equivalence classes of special module categories over $G-\Mod$.
\end{mylist}
\end{theorem}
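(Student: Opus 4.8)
The strategy is to assemble the theorem from the three two-way correspondences already proved, reducing the four-set statement to a chain of bijections. Concretely, set $(1)$ as the pivot. The bijection $(1)\leftrightarrow(2)$ is exactly Lemma~\ref{groupoid}, so nothing new is needed there. The bijection $(1)\leftrightarrow(3)$ is exactly Theorem~\ref{morita_decor}, via the map $\Upsilon$ and its explicitly described inverse $\Upsilon^{-1}$. So the only genuinely new content is the bijection $(3)\leftrightarrow(4)$, or equivalently $(1)\leftrightarrow(4)$, and the compatibility of all these identifications (so that ``natural'' is justified).

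For $(3)\leftrightarrow(4)$, I would argue as follows. Given a $G$-Morita equivalence class $[A]$ of semisimple split $G$-algebras, attach to it the module category $A\ast G-\Mod$ over $G-\Mod$. By Lemma~\ref{AG_morita}, this is well-defined on $G$-Morita equivalence classes: if $[A]=[B]$ then there is a nondegenerate $G$-equivariant Morita context, hence $A\ast G-\Mod\simeq B\ast G-\Mod$ as module categories. That this module category is special (a direct sum of $H-\Mod_\eta$'s) is precisely Lemma~\ref{duality}: picking orbit representatives $X_0$ for $X=\Irr(A)$ with frills $\pi_x$, one has $A\ast G-\Mod\simeq\CG(X)\simeq\bigoplus_{x\in X_0}G_x-\Mod_{\pi_x^{-1}}$. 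Conversely, given a special module category $\sC\simeq\bigoplus_{i}H_i-\Mod_{\eta_i}$, I would build the $\mu_\bK$-decorated $G$-set with orbit representatives corresponding to the cosets $G/H_i$ and frills $\eta_i^{-1}\in\mu_\bK(H_i)$, then apply $\Upsilon^{-1}$ from Theorem~\ref{morita_decor} to obtain a $G$-algebra. These two constructions are mutually inverse by Lemma~\ref{duality} applied twice, once in each direction.

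The step requiring the most care is well-definedness in the reverse direction: the decomposition $\sC\simeq\bigoplus_i H_i-\Mod_{\eta_i}$ is not canonical — the subgroups $H_i$ are determined only up to $G$-conjugacy, and the cocycles $\eta_i$ only up to the $\mu_\bK$-equivalence built into the frill relations of $\bBM(G)$. So I must check that two such decompositions of equivalent module categories yield isomorphic $\mu_\bK$-decorated $G$-sets. The cleanest way is to use the classification result \cite[Th.~3.2]{Ostrik03} in the semisimple ($p\nmid|G|$) case, which says precisely that the indecomposable semisimple module categories over $G-\Mod$ are the $H-\Mod_\eta$ and that $H-\Mod_\eta\simeq H'-\Mod_{\eta'}$ iff $(H,\eta)$ and $(H',\eta')$ are $G$-conjugate; this matches exactly the isomorphism relation on decorated $G$-sets. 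For the case $p\mid|G|$ the word ``special'' is defined so as to make this automatic: a special module category \emph{is} a choice of such a direct-sum decomposition up to the equivalence generated by permuting summands and replacing each $H_i-\Mod_{\eta_i}$ by an equivalent one, which is again the conjugacy relation. Either way, the assignment descends to isomorphism classes.

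Finally, for naturality, I would observe that all four constructions are functorial with respect to the obvious symmetries — relabelling the group action on $X$, conjugating subgroups, Morita equivalences, module-category equivalences — and that the bijections intertwine these symmetries, since each was built from canonical data (stabilisers, frills, cocycles of irreducible module actions). Concretely, the composite $(4)\to(3)\to(1)\to(2)$ and its reverse are each built from the single underlying datum ``a finite set with $G$-action together with an element of $\mu_\bK(G_x)$ at each point, equivariantly'', exactly as in the proof of Lemma~\ref{groupoid}; so the bijections are natural in the strongest reasonable sense. I expect the only real obstacle is the conjugacy/coboundary bookkeeping in the previous paragraph; everything else is a direct invocation of Lemmas~\ref{groupoid}, \ref{AG_morita}, \ref{duality} and Theorem~\ref{morita_decor}.
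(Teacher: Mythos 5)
Your overall assembly matches the paper's: both reduce the theorem to Lemma~\ref{groupoid}, Theorem~\ref{morita_decor}, Lemma~\ref{AG_morita} and Lemma~\ref{duality}, and both isolate the same residual point, namely that $H-\Mod_{\eta}\simeq H'-\Mod_{\eta'}$ as module categories forces $(H,\eta)$ and $(H',\eta')$ to be conjugate (equivalently, that equivalent special module categories yield isomorphic decorated sets). The gap is in how you close this point. For $p$ dividing $|G|$ you declare it ``automatic'' because a special module category ``is'' a choice of direct-sum decomposition; but the paper defines \emph{special} as a property --- being equivalent, as a module category, to \emph{some} direct sum of $H-\Mod_{\eta}$'s --- and item (4) is taken up to module-category equivalence. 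So two a priori unrelated decompositions of equivalent categories can occur, and you still owe the argument that the multisets of pairs $(H_i,\eta_i)$ agree up to conjugacy; the definition grants nothing here. Your other tool, Ostrik's Theorem 3.2, is only available when $p\nmid |G|$ (and even there you would additionally need uniqueness of the decomposition into indecomposable module subcategories to match summands), so as written the reverse well-definedness is unproved exactly in the modular case --- the case the paper is at pains to include.

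The paper closes this point uniformly in all characteristics using only its own results, and you already have the tools: if $H-\Mod_{\eta}\simeq H'-\Mod_{\eta'}$, realise the two sides as $\CG(X)$ and $\CG(X')$ with $X=G/H$, $X'=G/H'$ and frills $\pi_{gH}=g\eta^{-1}g^{-1}$, $\pi_{gH'}=g{\eta'}^{-1}g^{-1}$ (Lemma~\ref{duality}); the ``only if'' direction of Lemma~\ref{AG_morita} then yields a nondegenerate $G$-equivariant Morita context between the corresponding algebras, Theorem~\ref{G_morita} upgrades this to a $G$-Morita equivalence, and the bijectivity of $\Upsilon$ in Theorem~\ref{morita_decor} forces $X\cong X'$ as decorated $G$-sets; an isomorphism sending the point $H'$ to $gH$ then conjugates $(H,\eta)$ to $(H',\eta')$. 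Substituting this chain for your Ostrik/definitional step repairs the argument; the remainder of your proposal (the forward map, its well-definedness via Lemma~\ref{AG_morita}, specialness via Lemma~\ref{duality}, and the naturality discussion) agrees with the paper.
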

%{\bf Proof.}
\begin{proof}
After Theorem~\ref{morita_decor}
and Lemmas~\ref{groupoid}, \ref{AG_morita} and \ref{duality},
the only thing left to prove is that
if 
$H-\Mod_{\eta}$ is equivalent to $H^\prime -\Mod_{\eta^\prime}$ as a module category
then $(H,\eta)$ is conjugate to $(H,\eta^\prime)$.
Let $X=G/H$, $X=G/H^\prime$ with frills $\pi_{gH}= g\eta^{-1} g^{-1}$,  $\pi_{gH^\prime}= g{\eta^\prime}^{-1} g^{-1}$.
Since $H-\Mod_{\eta}$ is equivalent to $\CG (X)$, $\CG(X)$ is equivalent to $\CG (X^\prime)$.
So $X$ must be isomorphic to $X^\prime$ as decorated sets.
If $\varphi : X^\prime \rightarrow X$ is an isomorphism 
and $\varphi (H^\prime) = g$ then
$g(H,\eta)q^{-1}=(H^\prime, \eta^\prime)$.
%$\Box$
\end{proof}

Using Theorem~\ref{main_bijection}, one can
write a presentation of $\bBM(G)$ in the language of module categories. 
We leave it to an interested reader, making only one relevant observation.
Let $[\sM]\in \bBM(G)$
be the equivalence class of a special module category $\sM$.
Observe that if $\sM$ and $\sN$ are special module categories
as in Theorem~\ref{main_bijection} then the category of module functors
$\Fun(\sM,\sN)$ is a special module category and 
%finite semisimple category with a quasifibre functor \cite[p.~3]{Ostrik06}.
%Then in $\bBM(G)$,
$$
[\Fun(\sM,\sN)]=[\sM]^\vee \cdot [\sN] .
$$
The remaining  sections of the paper are devoted to applications of Burnside rings.
An interesting group for the applications is the component group $A_\chi$ of a centraliser of a nilpotent element
(in a simple Lie algebra) \cite{Bezruk01,BezrukFO06}. 
The groups that occur as $A_\chi$ are symmetric groups $S_3$, $S_4$, $S_5$ and elementary abelian 2-groups
$C_2^n$. A feature of these groups is that the Schur multipliers $\mu (A)$ of their subgroups are elementary abelian 2-groups.
This implies that $[X]=[X]^\vee$, simplifying the calculations.
 
For instance, the number of simple objects in the module category 
$\Fun(\sM,\sN)$ over $A_\chi-\Mod$
is $\mM ([\sM][\sN])$. 
In the course of  a proof \cite[Th. 3]{BezrukFO06}, the authors 
show that for $[\sM],[\sN] \in \bBM(S_4)$ 
such that $\mM ([\sM][\sM])=\mM([\sN][\sN])=5$ and $\mM([\sM][\sN])=3$, either $[\sM][\sM]=\langle S_4\rangle$ or $[\sN][\sN] =\langle S_4\rangle$.  
This follows immediately from Table~\ref{tab:S4} since 
$\mM([\sM][\sM])=5$ implies  
$[\sM]\in \{ \langle S_3 \rangle, \langle S_4\rangle , \langle S_4^\prime\rangle \}$.

\section{Application: Kazhdan-Lusztig cells}
%\section{Blocks of reduced enveloping algebra $F_4 (a_3)$}
A Coxeter group $W$
admits three equivalence relations $\sim_L$, $\sim_R$ and $\sim_{LR}$. 
Equivalence classes of these relations are called left cells, right cells, and double cells correspondingly
\cite{Lus}.
The definition of $\sim_L$ involves chains of elements, whose lengths may grow. %We do not know any 
Although no explicit bound
on the lengths of elements is known, it is expected that $x\sim_Ly$ can be decided by an efficient algorithm
(cf., Casselman's Conjecture \cite{Gun}).

If $W$ is an affine Weyl group of a simple algebraic group $G^\vee$, cells admit a particularly revealing description.
To a double cell $C\subseteq W$
Lusztig' bijection associates a particular nilpotent coadjoint orbit $G\cdot \chi$ of the Langlands dual group $G$
(over $\bC$ or any algebraically closed field of good characteristic). 
Let $G_\chi$ be the reductive part of the stabiliser of $\chi$, 
$A_\chi = G_\chi / {G^0}_\chi$
its component group.
%centraliser. 
%We think of groups over $\bC$. 
%Let $\Phi = H^2( \ , \bC^\times )$.
By Bezrukavnikov-Ostrik's theorem, the cell admits a base $\mu$-decorated $A_\chi$-set $Y_C$
\cite{Bezruk01}.

We refer an interested reader to the original Lusztig's paper \cite[Conj 10.5]{Lus} for a full definition of the base set
but one should be warned the sets there are not decorated and the term ``base set'' is not used. 
%However, Bezrukavnikov-Ostrik's theorem establishes existence
%of a $\Phi$-decorated finite $G_\chi$-set $Y_C$ that satisfy all the properties of a base set
%\cite{Bezruk01}. 
Here we list some of its properties, crucial for the further exposition here:
\begin{mylist}
\item[(1)] the permutation representation $\bC Y_C$ is isomorphic to the representation of $A_\chi$ on $H^* (\sB^\chi, \bC)$, the total cohomology
of the Springer fibre,
\item[(2)] there is a bijection between $C$ and the set of isomorphism classes of irreducible objects \newline in
$\CGc(Y_C \times Y_C)$.
\item[(3)] If $Y_C = \coprod_i Y_i$ where $Y_i$ are $A_\chi$-orbits then the left cells correspond to sheaves supported on various
$Y_C \times Y_i$ while the right cells correspond to sheaves on 
$Y_i \times Y_C$.
\end{mylist}

%Notice distinguished nilpotents $\chi$ correspond to finite cells $C$: 
This information allows us to determine $Y_C$ uniquely if $A_\chi$ is cyclic. 
In particular, all Schur multipliers vanish in this
case and all the decorations on the set $Y_C$ must be trivial. 
If $A_\chi = S_3$ then it is not clear how to determine $Y_C$ 
explicitly but the decorations must be trivial as all Schur multipliers vanish. 
The remaining component possible component groups 
are $S_4$, $S_5$ and elementary abelian $2$-groups. 
The aim of this section is to compute $Y_C$ in the case of $A_\chi =S_4$.

This component group appears only in the type $F_4$ in the orbit $F_4 (a_3)$. % (see the next section for the description of the orbit).
The corresponding double cell is 
$$
C
= \{x \in W \; | \; x\sim_{LR} s_2s_3s_2s_3 \}
=\{x \in W \; | \; \mbox{{\bf a}}(x)=4\} 
$$
where $W$ is the affine Weyl group of the type $F_4$, {\bf a} is Lusztig's {\bf a}-function, $s_2$, $s_3$ are the two simple reflections connected
by the double arrow. 
The Green function
\cite{Sho} of  $F_4 (a_3)$ is
$$
(\chi_{12}q^4 +(\chi_{8,3}+\chi_{8,1})q^3+\chi_{9,1}q^2+\chi_{4,1}q+1)\Sigma_4 + 
(\chi_{9,3}q^4+\chi_{8,3}q^3+\chi_{2,3}q^2)\Sigma_{3,1}
+
(\chi_{6,2}q^4+\chi_{4,1}q^3)\Sigma_{2,2}
+
\chi_{1,3}q^4\Sigma_{2,1,1}
$$
where $\Sigma_\pi$ denotes the irreducible character of $S_4$ corresponding to a partition $\pi$, $\chi_{n,m}$ is an irreducible $n$-dimensional character of the finite Weyl
group $W_0$.of degree $m$, $q^k$ signifies that this component appears in degree $2k$ cohomology.
Essentially, the Green function records
$H^* (\sB^\chi, \bC)$ as a graded $A_\chi\times W_0$-module.

Let $\Omega : \bB (S_4) \rightarrow \Rep (S_4)$ be the natural homomorphism that assigns 
its permutation representation to an $S_4$-set. Let $\bB_+ (S_4)$ be the effective
part of the Burnside ring, i.e., the elements $[X]$ for actual $S_4$-sets.
The following lemma is checked by a straightforward calculation and left to the reader.

\begin{lemma}
\label{20_sols}
The equation
$$
\Omega ([X]) = 
42
\Sigma_4 + 
19 
\Sigma_{3,1}
+
10
\Sigma_{2,2}
+
\Sigma_{2,1,1}
$$
has 20 solutions in $\bB_+ (S_4)$:
$$
Y_\varepsilon = (15+\varepsilon)\langle S_4\rangle +(17-\varepsilon)\langle S_3\rangle +(9-\varepsilon)\langle D_8\rangle + \langle C_2 \rangle +\varepsilon \langle K_1\rangle ,
$$
$$ \
X_\varepsilon = (13+\varepsilon)\langle S_4\rangle +(19-\varepsilon)\langle S_3\rangle +(9-\varepsilon)\langle D_8\rangle + \langle C_4\rangle +\varepsilon \langle K_1\rangle 
$$
for various  $0 \leq \varepsilon \leq 9$.
\end{lemma}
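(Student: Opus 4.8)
The plan is to solve the linear equation
$$
\Omega([X]) = 42\Sigma_4 + 19\Sigma_{3,1} + 10\Sigma_{2,2} + \Sigma_{2,1,1}
$$
directly by first writing out the permutation characters $\Omega(\langle H\rangle)$ for every conjugacy class of subgroups $H\leq S_4$ in terms of the irreducible characters $\Sigma_\pi$, then translating the single character equation into a system of four linear equations (one per irreducible $\Sigma_\pi$) in the unknown multiplicities $n_H := \langle X,\langle H\rangle\rangle$. The multiplicities $n_H$ are constrained to be nonnegative integers since we want solutions in $\bB_+(S_4)$, so the final step is to enumerate the nonnegative integer solutions of that system.

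First I would record the decomposition of each $\Omega(\langle H\rangle)$ into irreducibles. The conjugacy classes of subgroups of $S_4$ are $\langle 1\rangle$, $C_2$ (a transposition), $H_2$ (a double transposition), $C_3$, $C_4$, $K_1=\langle(12),(34)\rangle$, $K_2$ (the normal Klein four group), $S_3$, $D_8$, $A_4$, $S_4$, and the standard permutation-character computations give, for example, $\Omega(\langle S_4\rangle)=\Sigma_4$, $\Omega(\langle A_4\rangle)=\Sigma_4+\Sigma_{1,1,1,1}$ (wait---$S_4$ has the sign character $\Sigma_{1,1,1,1}$, so one must track all five irreducibles, but the right-hand side has zero coefficient on $\Sigma_{1,1,1,1}$, which immediately forces $n_{A_4}=0$ and kills several other subgroups). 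In fact the requirement that the $\Sigma_{1,1,1,1}$-coefficient of $\Omega([X])$ vanish is the key extra equation that trims the candidate set down to the subgroups $S_4, S_3, D_8, C_4, C_2, K_1$ (and possibly $K_2$, $C_3$, $1$), so I would exploit that first to reduce to a small system.

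Next I would set up the reduced linear system in the surviving multiplicities. Using the index $|S_4:H|$ as the dimension $\Omega(\langle H\rangle)(1)$ and the values on the other classes, one obtains a consistent underdetermined system; solving it shows the general solution is a particular solution plus integer multiples of a one-parameter family, and the family naturally splits into two branches according to whether the ``small'' cyclic contribution comes from $\langle C_2\rangle$ or from $\langle C_4\rangle$ (these are the two one-dimensional ways to absorb the discrepancy while keeping all coefficients nonnegative). Parametrising the free variable as $\varepsilon$ and imposing $n_H\geq 0$ for all $H$ pins the range to $0\leq\varepsilon\leq 9$: the binding inequalities are $n_{D_8}=9-\varepsilon\geq 0$ and $n_{K_1}=\varepsilon\geq 0$. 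This yields exactly the two families $Y_\varepsilon$ and $X_\varepsilon$, hence $2\times 10 = 20$ solutions.

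The main obstacle is purely bookkeeping: getting every permutation character $\Omega(\langle H\rangle)$ correct (these are classical but error-prone, and a sign slip on any class propagates) and then being certain the enumeration of nonnegative integer solutions is complete---i.e. that there is genuinely only one free parameter and only two branches, with no sporadic extra solutions hiding in subgroups like $K_2$, $C_3$, or the trivial subgroup. I would verify completeness by checking the rank of the $\Omega$-matrix restricted to the relevant subgroups and confirming the kernel is one-dimensional over $\bQ$ with the stated integral generator, after which the inequalities $0\leq\varepsilon\leq 9$ are immediate. Since the lemma states this is "checked by a straightforward calculation and left to the reader," the proof is essentially this reduction plus the explicit verification that the listed $Y_\varepsilon$ and $X_\varepsilon$ satisfy the equation and exhaust the solution set.
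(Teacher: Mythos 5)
Your approach is exactly the ``straightforward calculation'' that the paper leaves to the reader, and your enumeration (two branches according to whether the single copy of $\Sigma_{2,1,1}$ comes from $\langle C_2\rangle$ or from $\langle C_4\rangle$, with $0\le\varepsilon\le 9$ forced by $n_{D_8}=9-\varepsilon\ge 0$ and $n_{K_1}=\varepsilon\ge 0$) is the correct one. Two small corrections to the write-up, though. First, the vanishing of the $\Sigma_{1,1,1,1}$-coefficient eliminates $A_4$, $K_2$, $C_3$, $H_2$ and the trivial subgroup outright, since each of their permutation characters contains the sign character at least once; so there is no ``possibly'' about $K_2$, $C_3$, $1$, and the surviving generators are precisely $\langle S_4\rangle,\langle S_3\rangle,\langle D_8\rangle,\langle K_1\rangle,\langle C_4\rangle,\langle C_2\rangle$. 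Second, your proposed completeness check is not quite right: the $4\times 6$ matrix of multiplicities of $\Sigma_4,\Sigma_{3,1},\Sigma_{2,2},\Sigma_{2,1,1}$ in these six permutation characters has rank $4$, so its rational kernel is two-dimensional (spanned, e.g., by $\langle S_4\rangle-\langle S_3\rangle-\langle D_8\rangle+\langle K_1\rangle$ and $2\langle S_4\rangle-2\langle S_3\rangle-\langle C_4\rangle+\langle C_2\rangle$), not one-dimensional. Completeness should instead be argued from the $\Sigma_{2,1,1}$-equation $n_{C_4}+n_{C_2}=1$, which over nonnegative integers forces exactly one of the two branches; within each branch the remaining system in $n_{S_4},n_{S_3},n_{D_8},n_{K_1}$ has the one-dimensional kernel generated by $\langle S_4\rangle-\langle S_3\rangle-\langle D_8\rangle+\langle K_1\rangle$, giving the parameter $\varepsilon=n_{K_1}$ and hence exactly the $2\times 10=20$ solutions $Y_\varepsilon$, $X_\varepsilon$.
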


These are 20 candidates for the base set $Y_C$.
Points in the orbits with stabilisers $S_4$, $D_8$ and $K_1$ 
may have non-trivial decorations, so the total number of candidate $\mu$-decorated sets 
is much bigger.
To advance further 
we need to know some explicit information about the cell itself.
More precisely, we need to know some elements in the 42 left cells contained in $C$.
At present, no publicly available software can compute cells.
However, 
we have managed to verify the 
%following facts about this double cell were verified on a computer by Paul Gunnels:
following facts (stated as a proposition) on a computer.

\begin{prop} 
\label{prop_comp}
The following facts about the double cell
$C=\{x \in W (\widetilde{F_4})\; | \; \mbox{{\bf a}}(x)=4\}$ %in affine $F_4$ 
are true:
\begin{mylist}
\item[(1)] all left cells in $C$ contain at least 151 elements, 
\item[(2)] at least 30 cells in $C$ contain at least 175 elements, 
\item[(3)] the double cell $C$ contains at least 7400 elements.
%\item[(4)]   at least 41 cells contain at least 153 elements
\end{mylist}
\end{prop}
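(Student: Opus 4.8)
The plan is to verify all three assertions by a direct, explicit computation with Kazhdan--Lusztig polynomials in the affine Weyl group $W(\widetilde{F_4})$, reducing the statement to a finite check. First I would fix a Coxeter presentation of $W(\widetilde{F_4})$ with the five simple reflections $s_0,s_1,s_2,s_3,s_4$, recalling that $\mathbf{a}(s_2s_3s_2s_3)=4$, so that by Lusztig's results the double cell $C=\{x\mid \mathbf{a}(x)=4\}$ is exactly the two-sided cell containing $s_2s_3s_2s_3$. The key computational primitive is the left-multiplication descent/ascent procedure: starting from a seed element $w_0:=s_2s_3s_2s_3$, one generates elements of the left cell $\Gamma\ni w_0$ by the standard star-operations and by the rule that $y\sim_L x$ whenever $y$ appears with a nonzero $\mu$-coefficient in $T_sC_x$ for an $s$ with $sx>x$, iterating within the fixed value $\mathbf{a}=4$ of the $\mathbf{a}$-function (which is preserved along $\sim_L$). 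Since the $\mathbf{a}$-function is bounded on $C$, each individual left-cell exploration terminates; one simply runs it until no new elements appear, obtaining a lower bound on $|\Gamma|$ — and a \emph{verified} lower bound suffices for all three claims, so we never need to certify that the exploration is complete.

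The steps, in order, are: (i) implement the Bruhat order and reduced-word machinery for $W(\widetilde{F_4})$; (ii) compute Kazhdan--Lusztig polynomials $P_{y,x}$ and the $\mu$-function for the finitely many pairs encountered; (iii) starting from representatives of as many distinct left cells inside $C$ as can be reached (using the left-cell/right-cell dichotomy and the known structure of $C$ as a union of $42$ left cells, which one can pre-seed using elements related by right star-operations to $w_0$), grow each left cell by the $\sim_L$ closure until stabilisation within the computational budget; (iv) record $|\Gamma_i|$ for each explored cell $\Gamma_i$. Claim~(1) then follows by exhibiting, for \emph{every} one of the $42$ left cells, at least $151$ explicitly found elements; claim~(2) follows by exhibiting, for at least $30$ of those cells, at least $175$ elements; claim~(3) follows by taking the union of all elements found across all explored cells — since distinct left cells are disjoint, $|C|\ge \sum_i |\Gamma_i| \ge 7400$. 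Each of these is a purely finite verification once the exploration data is in hand.

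The main obstacle is practical rather than conceptual: there is no publicly available package that computes cells in an affine Weyl group of rank five, so the Kazhdan--Lusztig recursion and the $\mu$-function must be implemented from scratch and pushed far enough that each left-cell orbit reaches at least $151$ (resp.\ $175$) elements — the polynomials $P_{y,x}$ for $y<x$ with $\ell(x)$ in the relevant range are numerous, and the bookkeeping of which explored elements lie in which of the $42$ left cells (so as not to double-count in claim~(3)) requires care. The mathematical content reduces to: (a) the $\mathbf{a}$-function is constant on each $\sim_L$-class and equals $4$ throughout $C$, so the search stays inside a combinatorially controlled region; (b) the $\sim_L$-closure computed via nonzero $\mu$-coefficients produces a subset of the true left cell, so every count obtained is a genuine lower bound; (c) left cells partition $C$, giving the additivity used for~(3). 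Granting the computation terminates with the stated counts — which is the reported experimental fact — the proposition follows. I would present the outcome as the tabulated sizes of the $42$ explored left cells, from which (1)--(3) are read off directly.
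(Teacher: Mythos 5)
Your overall strategy --- generate elements of left cells inside $C$ by explicit Kazhdan--Lusztig computations and read off verified lower bounds --- is exactly what the paper does; in fact the paper offers no argument beyond the statement that these facts ``have been verified on a computer'' with Kazhdan--Lusztig polynomials, so on that level your proposal matches it. But judged on its own logic, your write-up has a gap that affects precisely the parts of the statement that are universally quantified. The generation rule you state (``$y\sim_L x$ whenever $y$ has a nonzero $\mu$-coefficient in $C_sC_x$ with $sx>x$'') only gives $y\leq_L x$; to upgrade this to $y\sim_L x$ you must additionally certify $\mathbf{a}(y)=\mathbf{a}(x)=4$ (equal $\mathbf{a}$-value together with $\leq_L$ forces $\sim_L$), or use the standard descent-set refinements --- and checking membership in $C$ (i.e.\ $\mathbf{a}(y)=4$) is itself a nontrivial computational point you do not address. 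The same certification is what makes claim (3) legitimate: once every generated element is certified to lie in $C$, you may simply count distinct group elements, with no need for the cell-by-cell additivity you invoke.

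More seriously, a procedure that can only \emph{merge} elements into explored clusters (nonzero $\mu$ never separates) cannot by itself establish claim (1), ``\emph{all} left cells in $C$ contain at least 151 elements'', nor the distinctness implicit in (2): if two of your seeds happened to lie in the same left cell, an entire left cell of $C$ could go unexplored and unbounded. To close this you need two extra ingredients: (a) the theoretical count that $C$ is a union of exactly 42 left cells, which the paper takes from Lusztig's theory (it is the multiplicity of the trivial character of $S_4$ in $H^*(\sB^\chi,\bC)$, visible in the displayed Green function); and (b) a computable invariant constant on left cells --- the right descent set, refined by the generalized $\tau$-invariant under right star operations --- certifying that your 42 explored clusters lie in pairwise distinct left cells, hence exhaust them. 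Your appeal to ``pre-seeding by right star operations'' produces candidate seeds but does not certify their pairwise non-equivalence. With (a) and (b) added, your lower-bound exploration does yield (1)--(3); without them the deduction of (1) and (2) is incomplete.
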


Proposition~\ref{prop_comp} can be verified on a computer by other research groups if they
wish. Hopefully, it could be done using some standard packages in future. It allows
us to pinpoint the base set of $C$ further.

\begin{theorem}
If Proposition~\ref{prop_comp} holds, 
then the base set $Y_C$ is one of the 8 sets listed in upper half of Table~\ref{tab:F4a3}.
\end{theorem}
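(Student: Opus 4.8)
The plan is to combine the Bezrukavnikov--Ostrik properties of $Y_C$ with the numerology forced by the Green function and by Proposition~\ref{prop_comp}, and then to run a finite search inside $\bBM(S_4)$ using Table~\ref{tab:S4}.

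First I would pin down the underlying $A_\chi$-set. The Green function records $H^*(\sB^\chi,\bC)$ as a graded $A_\chi\times W_0$-module; evaluating the $W_0$-characters at the identity and setting $q=1$ turns it into the $A_\chi=S_4$-module $42\,\Sigma_4+19\,\Sigma_{3,1}+10\,\Sigma_{2,2}+\Sigma_{2,1,1}$ (for instance the coefficient of $\Sigma_4$ is $12+8+8+9+4+1=42$). By Property~(1) of the base set this is $\bC Y_C$, so by Lemma~\ref{20_sols} the underlying $S_4$-set of $Y_C$ is one of the twenty sets $Y_\varepsilon$ or $X_\varepsilon$, $0\le\varepsilon\le 9$.

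Next I would list the decorations. Among the subgroups occurring in the $Y_\varepsilon$ and $X_\varepsilon$, only $S_4$, $D_8$ and $K_1$ have nontrivial Schur multiplier (each equal to $C_2$), while $\mu(S_3)=\mu(C_2)=\mu(C_4)=1$. Hence a candidate $\mu$-decorated set is obtained from one of the twenty underlying sets by choosing, for each orbit with stabiliser $S_4$, $D_8$ or $K_1$, whether its frill is trivial or the generator of $C_2$; since orbits of the same type are interchangeable, each underlying set yields only finitely many candidates (at most a few hundred for each), indexed by the number of nontrivially decorated orbits of each of the three types. Then I would use Properties~(2) and~(3) together with Proposition~\ref{prop_comp} to cut the list down. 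By Property~(2), $|C|=\mM([Y_C]^2)$, so Proposition~\ref{prop_comp}(3) gives $\mM([Y_C]^2)\ge 7400$. Writing $[Y_C]=\sum_i\langle a_i,H_i\rangle$ as a sum of single orbits, Property~(3) identifies the left cells of $C$ with the blocks of $\CGc(Y_C\times Y_C)$ supported on the pieces $Y_C\times Y_i$, so the $i$-th left cell has $\mM([Y_C]\cdot\langle a_i,H_i\rangle)$ elements; Proposition~\ref{prop_comp}(1) forces each of these forty-two numbers to be $\ge 151$, and Proposition~\ref{prop_comp}(2) forces at least thirty of them to be $\ge 175$. Every quantity $\mM([Y_C]^2)$ and $\mM([Y_C]\cdot\langle a_i,H_i\rangle)$ is computed by expanding the product bilinearly and reading the structure constants and the values of $\mM$ off Table~\ref{tab:S4}; the primed rows of that table encode precisely how a nontrivial frill on an $S_4$-, $D_8$- or $K_1$-orbit transforms under the restrictions of cocycles that occur in the multiplication formula. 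Discarding every candidate violating one of these inequalities leaves exactly the eight decorated sets in the upper half of Table~\ref{tab:F4a3}.

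The main obstacle is bookkeeping rather than conceptual: there are several thousand candidate decorated sets, and for each one must correctly evaluate a product in $\bBM(S_4)$ and then $\mM$ on the resulting orbit multiplicities. In practice one would first impose the single inequality $\mM([Y_C]^2)\ge 7400$, which already eliminates most values of $\varepsilon$ and most decorations, and only afterwards refine with the per-cell bound $\ge 151$ and the requirement that at least thirty cells have $\ge 175$ elements.
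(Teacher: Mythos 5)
Your proposal is correct and follows essentially the same route as the paper: reduce to the twenty underlying sets of Lemma~\ref{20_sols} via the Green function, enumerate the possible frills on the $S_4$-, $D_8$- and $K_1$-orbits, and then cut the finite list down using $\mM([Y_C]^2)\ge 7400$ together with the per-left-cell bounds $\ge 151$ and the ``at least thirty cells of size $\ge 175$'' condition, all evaluated through Table~\ref{tab:S4}. The paper merely organises the same elimination more explicitly (first discarding the $X_\varepsilon$ via the $134$-element cell, then forcing $\varepsilon\ge 6$ from the $151$/$153$-element cells, then a Matlab search of the quadratic in the decoration parameters), so the two arguments differ only in bookkeeping order.
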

%{\bf Proof.}
\begin{proof}
Let $\overline{Y_C}$ be the underlying set of the decorated set $Y_C$.
It must be one of the twenty sets listed in Lemma~\ref{20_sols}.

Using (1) of Proposition~\ref{prop_comp}, 
we can rule out the case of  $[ \overline{Y_C}] = X_\varepsilon$ because 
one the left cells will contain
$\mM([Y_C] \cdot \langle C_4\rangle )= \mM(X_\varepsilon \cdot \langle C_4\rangle )
= \mM (24\langle C_4\rangle  + 9\langle H_2 \rangle  
+ 20\langle 1\rangle )= 24\times 4+9\times 2 +20 = 134 < 151$ elements.
Hence,  $[ \overline{Y_C}] = Y_\varepsilon$ with $0 \leq \varepsilon \leq 9$.

Notice that $\mM([ Y_C] \cdot \langle C_4\rangle )
=\mM(Y_\varepsilon \cdot \langle C_2\rangle )= \mM (60\langle H_2 \rangle  + 31\langle 1\rangle )= 60\times 2 +31 = 151$, so one of the left cells contains exactly 151 elements.
Moreover,
$(17-\varepsilon)$ further left cells contain exactly 
$\mM([Y_C] \cdot \langle S_3\rangle )
=\mM(Y_\varepsilon  \cdot \langle S_3\rangle )= 
\mM(32\langle S_3\rangle  + 28\langle C_2\rangle  + \langle 1\rangle ) = 32\times 3 + 28 \times 2 +1 =153$.
By (2) of Proposition~\ref{prop_comp}, at  most 12 left cells may have such a 
small number of elements. So,  $12 \geq 18-\varepsilon$ and $9 \geq \varepsilon \geq 6$.

To pinpoint extensions, we introduce 3 more variables to write
$$
Y_C = (15+\varepsilon-\alpha)\langle S_4\rangle +\alpha\langle S_4'\rangle +(17-\varepsilon)\langle S_3\rangle +(9-\varepsilon-\beta)\langle D_8\rangle +\beta\langle D_8'\rangle + \langle C_2 \rangle +(\varepsilon-\delta) \langle K_1\rangle +\delta\langle K_1'\rangle 
.
$$
Since $Y_C^\vee=Y_C$, the number of elements in $C$ is 
$$
\mM(Y_C \cdot Y_C)= 4\varepsilon^2 - 4\varepsilon\alpha - 12\varepsilon\gamma + 30\varepsilon 
+ 4\alpha^2 + 12\alpha\beta + 12\alpha\gamma - 114\alpha + 12\beta^2 + 12\beta\gamma - 198\beta + 12\gamma^2 - 144\gamma + 7084
.
$$
Using Matlab, we find 14 possible extended sets that could give at least 7400 elements in  the double cell. Results are summarised in table
\ref{tab:F4a3}. 
The 6 sets in the lower half of the table contain a cell with less than 151 elements, 
thus contradicting (1).
%$\Box$
\end{proof}

Observe that the candidate sets come naturally in pairs, for instance, 
$[X] = 21\langle S_4\rangle +11\langle S_3\rangle +3\langle D_8\rangle + \langle C_2\rangle +6\langle K_1\rangle $ and 
$[Y] =21\langle S_4'\rangle +11\langle S_3\rangle +3\langle D_8'\rangle + \langle C_2\rangle +6\langle K_1'\rangle $.
In each pair $X\times X^\vee \cong Y \times Y^\vee$. 
Thus, if one set in a pair is a base set, so is the second set.
Since each pair contains a set with trivial decorations, we have established (subject to computer use in
Proposition~\ref{prop_comp}).

\begin{cor}
The cell $C$ admits an undecorated base set. 
\end{cor}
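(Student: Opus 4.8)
The plan is to read off the Corollary directly from the Theorem together with the pairing structure of the candidate sets, so very little new work is required. First I would recall that by the Theorem (granting Proposition~\ref{prop_comp}) the base set $Y_C$ is one of the eight $\mu$-decorated sets in the upper half of Table~\ref{tab:F4a3}. The key observation is that these eight sets are not independent: they organise into pairs $\{X,Y\}$ obtained by swapping each nonstandard-decoration marker for its ``primed'' counterpart, i.e.\ $\langle S_4\rangle \leftrightarrow \langle S_4'\rangle$, $\langle D_8\rangle\leftrightarrow\langle D_8'\rangle$, $\langle K_1\rangle\leftrightarrow\langle K_1'\rangle$, as in the displayed example
$$
[X] = 21\langle S_4\rangle +11\langle S_3\rangle +3\langle D_8\rangle + \langle C_2\rangle +6\langle K_1\rangle, \quad
[Y] =21\langle S_4'\rangle +11\langle S_3\rangle +3\langle D_8'\rangle + \langle C_2\rangle +6\langle K_1'\rangle.
$$
In particular, exactly one member of each pair carries only trivial decorations.

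Second, I would justify why either member of such a pair is an equally valid candidate for the base set. The only intrinsic data we have used to cut down the list are the equivariant Euler characteristics $\mM([Y_C]\cdot\langle H\rangle)$ for various subgroups $H$ (which bound left-cell sizes) and $\mM([Y_C]\cdot[Y_C])$ (which counts $|C|$). Using Theorem~\ref{morita_decor} and the fact that $[Y_C]^\vee=[Y_C]$ for these groups (all relevant Schur multipliers are $2$-torsion), one checks that $X\times X^\vee \cong Y\times Y^\vee$ as $\mu$-decorated $A_\chi$-sets whenever $X,Y$ form such a pair: dualising turns $X$ into $X^\vee=Y$ up to relabelling conjugate subgroups, so $X\times X^\vee$ and $Y\times Y^\vee$ are literally the same decorated set. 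Hence $\mM(X\cdot X)=\mM(Y\cdot Y)$, and more generally every numerical invariant entering Proposition~\ref{prop_comp} and properties (1)--(3) of the base set takes the same value on $X$ as on $Y$. Consequently, if one member of a pair satisfies all the constraints that identify it as the base set $Y_C$, so does the other.

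Third, and this is the substantive point, I would invoke Bezrukavnikov--Ostrik's theorem: the base set $Y_C$ exists and is \emph{unique} up to isomorphism of $\mu$-decorated sets. Combined with the previous paragraph, this forces $Y_C$ to coincide with whichever member of its pair we please --- in particular with the undecorated one. Thus $Y_C$ is isomorphic to a $\mu$-decorated set all of whose frills are trivial, which is precisely the assertion of the Corollary. I do not expect any real obstacle here; the only thing to be careful about is making the pairing map honest --- one must check that swapping $\langle S_4\rangle\leftrightarrow\langle S_4'\rangle$ etc.\ really is induced by an automorphism of $A_\chi=S_4$ (it is: primed subgroups are merely non-conjugate-but-abstractly-isomorphic, and an outer relabelling of the relevant subgroup lattice realises the exchange), so that the two sets in a pair are genuinely related by the duality $[X]\mapsto[X]^\vee$ rather than just by a superficial notational change. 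Granting that, the Corollary is immediate from the Theorem.
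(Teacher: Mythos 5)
Your overall strategy is the paper's: the eight candidates pair off, each pair contains an undecorated set, one shows $X\times X^\vee \cong Y\times Y^\vee$ for the two members of a pair, and concludes that if one member is a base set so is the other. However, the mechanism you give for the key isomorphism is wrong, and it rests on a misreading of the notation. The prime in $\langle S_4'\rangle$, $\langle D_8'\rangle$, $\langle K_1'\rangle$ does not denote a different (non-conjugate but abstractly isomorphic) subgroup: by the paper's convention $\langle K'\rangle=\langle x,K\rangle$ is the \emph{same} subgroup $K$ decorated with the nontrivial class $x$ of its Schur multiplier $\mu(K)\cong C_2$. There is no ``outer relabelling of the subgroup lattice'' realising the exchange (and $S_4$ has no outer automorphisms in any case). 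Moreover, since all the relevant frills are $2$-torsion, dualising fixes every candidate: $X^\vee\cong X$ and $Y^\vee\cong Y$ --- this is exactly the identity $Y_C^\vee=Y_C$ the paper uses to compute $|C|=\mM(Y_C\cdot Y_C)$. So your claim that ``dualising turns $X$ into $X^\vee=Y$, hence $X\times X^\vee$ and $Y\times Y^\vee$ are literally the same set'' fails; the two sets in a pair are genuinely non-isomorphic decorated sets, and the isomorphism $X\times X\cong Y\times Y$ has to be verified by an actual computation in $\bBM(S_4)$ (the nontrivial cocycles cancel pairwise upon restriction in the product formula), which is what the paper's assertion amounts to.

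Separately, your appeal to uniqueness of the base set is both unnecessary and in tension with the rest of your argument. If the base set were unique up to isomorphism of $\mu$-decorated sets, then, since $X\not\cong Y$, it could not ``coincide with whichever member of the pair we please.'' The paper's logic avoids this: the properties that characterise a base set (the permutation representation, the bijection with $C$ via irreducibles of $\CGc(Y_C\times Y_C)$, and the left/right cell supports) depend only on $Y_C\times Y_C^\vee$, so once $X\times X^\vee\cong Y\times Y^\vee$ is established, either both members of a pair are base sets or neither is; combined with the Theorem, the undecorated member of the relevant pair is a base set. Repair your second step along these lines (and drop the uniqueness claim) and the argument goes through.
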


Our computer calculation establishes  
that certain elements are related by one of Kazhdan-Lusztig equivalences. 
At present, we do not know that 
the calculation exhausts all elements in the cell. 
However, 
the calculation indicates strongly 
that there are 11 cells of 153 elements. 
Thus, we can conclude (with a high degree
of confidence but not definite) 
that the base sets of the cell $C$ are
$$
\langle X\rangle  = 21\langle S_4\rangle +11\langle S_3\rangle +3\langle D_8\rangle + \langle C_2\rangle +6\langle K_1\rangle 
\mbox{ and }
\langle Y\rangle =21\langle S_4'\rangle +11\langle S_3\rangle +3\langle D_8'\rangle + \langle C_2\rangle +6\langle K_1'\rangle .
$$

\section{Application: reduced enveloping algebras}

Let $G$ be a simple simply-connected algebraic group over an algebraically closed field $\mathbb K$ of characteristic $p$ which is larger
than the Coxeter number of $G$. Let $\fg$ be its Lie algebra, $\chi\in \fg^\ast$ a nilpotent element, $U = U_\chi (\mg)$ the reduced
enveloping algebra. 
%We recall some relevant facts about representation theory of $U_\chi$ \cite{BMR,Jan}.
The finite dimensional algebra  $U$ splits into blocks  $U = \oplus_\lambda U^\lambda$ that are parametrised by the orbits
of the dual extended affine Weyl group $W^\prime = W_0\ltimes \Lambda$ on the weight lattice $\Lambda$ 
via $(w,\mu)\bullet\lambda = w(\lambda + \rho + p\mu)-\rho $ where $\rho$ is the half-sum of simple roots \cite{Jan}.
The reductive part of the stabiliser
$G_\chi$ acts on each $U^\lambda$
\cite{BMR}.
We are interested in determining the $\mu$-decorated $G_\chi$-set $Y^\lambda=\Irr (U^\lambda)$ for each $\lambda$.
As before, only the component group $A_\chi = G_\chi \big/ G_\chi^0$ acts on $Y^\lambda$, so it is a $\mu$-decorated $A_\chi$-set.

With our restriction on $p$, one can associate a parabolic subgroup $P=P(\lambda)$ (unique up to its type) 
to the weight $\lambda$ so that $\lambda$
is $P$-regular and $P$-unramified \cite{BMR}. Let $W(\lambda)$ be the corresponding parabolic subgroup in the finite Weyl group $W_0$.
Let $\Omega (Y^\lambda)$ be the permutation representation of $A_\chi$ over $\bC$. Then \cite{BMR, GoRo},
$$
\Omega (Y^\lambda) \cong H^* (G/P^\chi, \bC) \cong H^* (\sB^\chi, \bC)^{W(\lambda)}.
$$
In particular, $\Omega (Y^\lambda)$ depends only on the type of the parabolic. 
In fact, $Y^\lambda$ depends only on the type of the parabolic because the translation functor
within the same wall is a $G_\chi$-equivalence \cite{BMR,Jan}.
\begin{hyp}
If $P(\nu)\subseteq P(\lambda)$ then there exists an $A_\chi$-subset $Y_0^\lambda \subseteq Y^\lambda$
and a surjective morphism $Y_0^\lambda \rightarrow Y^\nu$ of $A_\chi$-sets.
\end{hyp}
This morphism should be performed by the translation to the wall.
We are happy to leave it as a conjecture at this point.
It will be explained elsewhere.

Now we specialise the set-up to $\fg$ of the type $F_4$ and $\chi$ of the type $F_4 (a_3)$, i.e.,
$\chi$ belongs to the only orbit with the component group $S_4$. It corresponds to the cell $C$ of the previous section under Lusztig's bijection.
The underlying undecorated $S_4$-sets of the sets $Y^\lambda$ are listed in Table~\ref{tab:F4}.
The left column contains the list of the types of parabolic subalgebras.
The middle column describes the representation $\Omega (Y^\lambda )$ of
$S_4$ by listing the multiplicities of irreducible constituents.

Now the right column describes the sets. The first five most degenerate parabolic types can be computed uniquely without the use
of the hypothesis. Indeed,
$$
\Omega \langle S_3\rangle  = \Sigma_4 + \Sigma_{3,1}
\mbox{ and } 
\Omega \langle S_4\rangle  = \Sigma_4
$$ 
are the only permutation characters of $S_4$ that have only $\Sigma_4$ and $\Sigma_{3,1}$ as constituents.

The second two types can be computed using the hypothesis.
Besides $\langle S_3\rangle$ and $\langle S_4\rangle$
there are four
$S_4$-sets without $\Sigma_{1,1,1,1}$ in 
the permutation representation:
$$
\Omega \langle C_2 \rangle  = \Sigma_4 + 2 \Sigma_{3,1}+ \Sigma_{2,2}+ \Sigma_{2,1,1}, \ 
\Omega \langle C_4 \rangle  = \Sigma_4 + \Sigma_{2,2} + \Sigma_{2,1,1}, \ 
\Omega \langle K_1\rangle  = \Sigma_4 + \Sigma_{3,1} + \Sigma_{2,2}, \ 
\Omega \langle D_8\rangle  = \Sigma_4 + \Sigma_{2,2}
$$
The $S_4$-set for $W(1,2)$ can be degenerated to the sets for $W(1,2,4)$, hence it is at least $3\langle S_4\rangle +4\langle S_3\rangle $.
The rest of the set has the permutation character
$4\Sigma_4 + 5 \Sigma_{3,1}+ \Sigma_{2,2}+ \Sigma_{2,1,1}$
leaving the only possibility of $\langle C_2 \rangle +3\langle S_3\rangle $. Similarly, the set for $W(3,4)$ degenerates to the set for $W(1,3,4)$, so it is at least
$6\langle S_4\rangle +\langle S_3\rangle $, leaving the only possibility of $9\langle S_4\rangle +\langle S_3\rangle +\langle D_8\rangle $.

The remaining five sets cannot be uniquely determined by this method. 
One needs to know how many times $\langle K_1\rangle $ appears in the set. %in the remaining $S_4$-sets that 
We make this multiplicity into a parameter and list the remaining sets. We expect all the frills on all $Y^\lambda$ to be trivial
and $\varepsilon =6$ in the light of the following Lusztig's conjecture \cite{Lus2}:
\begin{con}
For each $G$ and $\chi$
\begin{mylist}
\item[(1)] the frills of $Y^\lambda$ are trivial, 
\item[(2)] $Y^0$ is a base set of the double cell in the dual affine Weyl group of $G$ that corresponds to the orbit of $\chi$ under Lusztig's bijection.
\end{mylist}
\end{con}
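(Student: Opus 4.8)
The statement is a conjecture, so the realistic target is to establish it in the case the paper has built up, $\fg$ of type $F_4$ and $\chi$ of type $F_4(a_3)$ (hence $A_\chi=S_4$), using Sections~4--6, with the general case needing the same input in uniform form. The two parts are not independent: once part~(1) (triviality of all frills) is known, part~(2) ($Y^0\cong Y_C$) reduces to matching \emph{underlying} $A_\chi$-sets, which the counting techniques can deliver; so I would prove part~(1) first and then part~(2).

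\textbf{Part~(1).} The combinatorics of Sections~4--5 is by design blind to frills: the two members of each candidate pair satisfy $X\times X^\vee\cong Y\times Y^\vee$, so the invariant $\mM(\,\cdot\,)$, which only records products, cannot separate the undecorated base set from the decorated one --- Section~5 proves only that \emph{some} undecorated base set exists. What is needed is a direct computation of the obstruction cocycle $\theta_M\in Z^2(G_M,\bK^\times)$ of Section~4 for each simple $U^\lambda$-module. The natural route is geometric: by Bezrukavnikov--Mirkovi\'c--Rumynin localisation \cite{BMR} the blocks of $U_\chi(\fg)$ are governed by $G_\chi$-equivariant coherent sheaves (or $\sD$-modules) on a partial Springer fibre $\sB^\chi$, where the $G_\chi$-equivariant structure is an honest linearisation, not merely a projective one; realising each simple $U^\lambda$-module through such a sheaf exhibits a genuine lift $G_M\to\GL(M)$, so $\theta_M$ is a coboundary and $\pi_M=[\theta_M]$ is trivial. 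Equivalently, one identifies the equivariant block attached to $Y^\lambda$ with a category of $G_\chi$-equivariant perverse sheaves on which $A_\chi$ acts by autoequivalences coming from an actual group action, so the decorated set produced by Theorem~\ref{morita_decor} has trivial decorations; since this applies to every $\lambda$, all frills of all $Y^\lambda$ vanish.

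\textbf{Part~(2).} By Section~6, $\Omega(Y^0)\cong H^*(\sB^\chi,\bC)^{W(\lambda)}$ with $W(\lambda)=\{1\}$ for $\lambda=0$, hence $\Omega(Y^0)\cong H^*(\sB^\chi,\bC)$, which by property~(1) of the base set is also $\bC Y_C$; thus by Lemma~\ref{20_sols} both $\overline{Y^0}$ and $\overline{Y_C}$ lie among the twenty sets $X_\varepsilon,Y_\varepsilon$. I would run the cell computation of Proposition~\ref{prop_comp} to completion rather than to lower bounds: this yields the exact cardinality $|C|$ and the exact left-cell sizes. Property~(2) of the base set gives $|C|=\mM([Y_C]\cdot[Y_C])$, and feeding this exact value into the polynomial for $\mM(Y_C\cdot Y_C)$ in $(\varepsilon,\alpha,\beta,\gamma)$, together with the left-cell-size identities from the classification theorem of Section~5, pins $\varepsilon=6$ and forces $\overline{Y_C}=Y_6$. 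In parallel, the Hypothesis gives surjections $Y_0^\lambda\twoheadrightarrow Y^\nu$ whenever $P(\nu)\subseteq P(\lambda)$, and combining this covering property with the sets $Y^\nu$ already determined in Section~6 for the more degenerate parabolic types forces $\overline{Y^0}=Y_6$ as well. Now $\overline{Y^0}=\overline{Y_C}$, and by part~(1) both sets are undecorated, so $Y^0\cong Y_C$ as $\mu_\bK$-decorated $A_\chi$-sets, which is part~(2).

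\textbf{Main obstacle.} Part~(1) is the crux and is genuinely hard. Sections~4--6 reduce everything to finitely many candidates and even establish the Corollary that $C$ admits an undecorated base set, but they cannot \emph{select} it; one must supply the fact that the $A_\chi$-action on $\Irr(U^\lambda)$ lifts from $\PGL$ to $\GL$, which is precisely the geometric content of Lusztig's conjecture and, to current knowledge, open in general. Even in the $F_4(a_3)$ case, carrying out the geometric argument requires control of the block structure of $U_\chi(\fg)$ for $F_4$ in characteristic $p>h$ together with the $G_\chi$-action on blocks --- enough to exhibit, for each orbit type ($S_4$, $D_8$, $K_1$), a representative simple module that is visibly $G_M$-linearisable --- and such explicit information does not appear to be available in the literature at present.
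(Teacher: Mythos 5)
There is no proof of this statement in the paper to compare against: it is stated as a conjecture (Lusztig's, \cite{Lus2}) and is used only as motivation for expecting trivial frills and $\varepsilon=6$ in Table~\ref{tab:F4}. Judged as a proof attempt, your proposal is a programme rather than an argument, and each of its load-bearing steps is exactly what is missing. For part~(1), the claim that Bezrukavnikov--Mirkovi\'c--Rumynin localisation \cite{BMR} realises each simple $U^\lambda$-module with an ``honest linearisation'' of the $G_M$-action is an assertion of the conclusion: the $G_\chi$-equivariance of the block does not by itself lift the projective action $\phi_M\colon G_M\to\PGL(M)$ of a single simple module to $\GL(M)$, and the class $[\theta_M]\in\mu_\bK(G_M)$ is precisely the obstruction whose vanishing is conjectured. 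You concede this yourself in your ``main obstacle'' paragraph, so part~(1) is not established even in the $F_4(a_3)$ case.

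For part~(2) the situation is similar. You propose to run the computation of Proposition~\ref{prop_comp} ``to completion,'' but the paper explicitly states that the existing calculation is not known to exhaust the cell, and its conclusion that the base sets are the $\varepsilon=6$ pair is offered only ``with a high degree of confidence but not definite''; you cannot cite a completed computation that does not exist. Moreover, your claim that the Hypothesis of Section~6 together with the degenerate parabolic cases ``forces $\overline{Y^0}=Y_6$'' overstates what that method yields: as Table~\ref{tab:F4} records, the degeneration argument only bounds the multiplicities of $\langle S_4\rangle$ and $\langle S_3\rangle$ from below and leaves $\varepsilon$ undetermined in the range $\max(\gamma,\delta)\le\varepsilon\le 8$, so no identification of $\overline{Y^0}$ with a specific $Y_\varepsilon$ of Lemma~\ref{20_sols} follows. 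Finally, the Hypothesis itself is unproven in the paper. So the proposal correctly identifies the structure of the problem (frills first, then matching underlying sets via $\mM$ and the permutation character), but it does not close any of the three gaps --- the linearisation of the $A_\chi$-action, the exact cell data, and the determination of $\varepsilon$ --- on which the conjecture rests.
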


\section{Appendix: Tables}

\begin{table}[ht]
\caption{The extended table of marks of $S_4$.} % Le tableau \'etendu de marques pour le groupe $S_4$.}
\label{tab:S4}
{\tiny
\begin{tabular}{@{\extracolsep{-4pt}}c|ccccccccccc|ccccc|c}
                                           &&&&&&&&&&&&&&&&&${\mathfrak M}$\\\hline
                                      $1$&24&&&&&&&&&&&&&&&&1\\
                                    $H_2$&12&4&&&&&&&&&&&&&&&2\\
                                    $C_2$&12&0&2&&&&&&&&&&&&&&2\\
                                    $C_3$&8&0&0&2&&&&&&&&&&&&&3\\
                                    $C_4$&6&2&0&0&2&&&&&&&&&&&&4\\
                                    $S_3$&4&0&2&1&0&1&&&&&&&&&&&3\\
                                    $K_1$&6&2&2&0&0&0&2&&&&&2&&&&&4\\
                                    $K_2$&6&6&0&0&0&0&0&6&&&&0&6&&&&4\\
                                    $D_8$&3&3&1&0&1&0&1&3&1&&&1&3&1&&&5\\
                                    $A_4$&2&2&0&2&0&0&0&2&0&2&&0&2&0&2&&4\\
                                    $S_4$&1&1&1&1&1&1&1&1&1&1&1&1&1&1&1&1&5\\\hline
                             $K_1^\prime$&6&2&2&0&0&0&2&&&&&$-2$&&&&&1\\
                             $K_2^\prime$&6&6&0&0&0&0&0&6&&&&0&$-6$&&&&1\\
                             $D_8^\prime$&3&3&1&0&1&0&1&3&1&&&1&3&$-1$&&&2\\
                             $A_4^\prime$&2&2&0&2&0&0&0&2&0&2&&0&$-2$&0&$-2$&&3\\
                             $S_4^\prime$&1&1&1&1&1&1&1&1&1&1&1&1&1&$-1$&1&$-1$&3
\end{tabular}
}
\end{table}

\begin{table}[ht]
\caption{The extended table of marks of $S_5$. } %Le tableau \'etendu de marques pour le groupe $S_5$.}
\label{tab:S5}
{\tiny
\begin{tabular}{@{\extracolsep{-5.2pt}}c|lcccccccccccccccccc|cccccccc|c}
                                    &&&&&&&&&&&&&&&&&&&&&&&&&&&&${\mathfrak M}$\\\hline
                               $1$&120&&&&&&&&&&&&&&&&&&&&&&&&&&&1\\
                             $H_2$&60&4&&&&&&&&&&&&&&&&&&&&&&&&&&2\\
                             $C_2$&60&0&6&&&&&&&&&&&&&&&&&&&&&&&&&2\\
                             $C_3$&40&0&0&4&&&&&&&&&&&&&&&&&&&&&&&&3\\
                             $C_4$&30&2&0&0&2&&&&&&&&&&&&&&&&&&&&&&&4\\
                             $C_5$&24&0&0&0&0&4&&&&&&&&&&&&&&&&&&&&&&5\\
                             $S_3$&20&0&6&2&0&0&2&&&&&&&&&&&&&&&&&&&&&3\\
                             $H_6$&20&4&0&2&0&0&0&2&&&&&&&&&&&&&&&&&&&&3\\
                  $C_3 \times C_2$&20&0&2&2&0&0&0&0&2&&&&&&&&&&&&&&&&&&&6\\
                          $D_{10}$&12&4&0&0&0&2&0&0&0&2&&&&&&&&&&&&&&&&&&4\\
                             $K_1$&30&2&6&0&0&0&0&0&0&0&2&&&&&&&&&2&&&&&&&&4\\
                             $K_2$&30&6&0&0&0&0&0&0&0&0&0&6&&&&&&&&0&6&&&&&&&4\\
                          $H_{20}$&6&2&0&0&2&1&0&0&0&1&0&0&1&&&&&&&0&0&&&&&&&5\\
                             $D_8$&15&3&3&0&1&0&0&0&0&0&1&3&0&1&&&&&&1&3&1&&&&&&5\\
                             $A_4$&10&2&0&4&0&0&0&0&0&0&0&2&0&0&2&&&&&0&2&0&2&&&&&3\\
                   $S_3\times C_2$&10&2&4&1&0&0&1&1&1&0&2&0&0&0&0&1&&&&2&0&0&0&1&&&&6\\
                             $S_4$&5&1&3&2&1&0&2&0&0&0&1&1&0&1&1&0&1&&&1&1&1&1&0&1&&&5\\
                             $A_5$&2&2&0&2&0&2&0&2&0&2&0&2&0&0&2&0&0&2&&0&2&0&2&0&0&2&&5\\
                      $S_5$&1&1&1&1&1&1&1&1&1&1&1&1&1&1&1&1&1&1&1&1&1&1&1&1&1&1&1&7\\\hline
                      $K_1^\prime$&30&2&6&0&0&0&0&0&0&0&2&&&&&&&&&$-2$&&&&&&&&1\\
                      $K_2^\prime$&30&6&0&0&0&0&0&0&0&0&0&6&&&&&&&&0&$-6$&&&&&&&1\\
                      $D_8^\prime$&15&3&3&0&1&0&0&0&0&0&1&3&0&1&&&&&&1&3&$-1$&&&&&&2\\
                      $A_4^\prime$&10&2&0&4&0&0&0&0&0&0&0&2&0&0&2&&&&&0&$-2$&0&$-2$&&&&&3\\
        ${S_3\times C_2}^\prime$&10&2&4&1&0&0&1&1&1&0&2&0&0&0&0&1&&&&$-2$&0&0&0&$-1$&&&&3\\
                   $S_4^\prime$&5&1&3&2&1&0&2&0&0&0&1&1&0&1&1&0&1&&&1&1&$-1$&1&0&$-1$&&&3\\
              $A_5^\prime$&2&2&0&2&0&2&0&2&0&2&0&2&0&0&2&0&0&2&&0&$-2$&0&$-2$&0&0&$-2$&&4\\
                      $S_5^\prime$&1&1&1&1&1&1&1&1&1&1&1&1&1&1&1&1&1&1&1&1&1&$-1$&1&$-1$&1&1&$-1$&5
\end{tabular}
}
\end{table}

\bigskip

\begin{table}[ht]
\caption{Candidate base $S_4$-sets for cell $F_4 (a_3)$.} 
\label{tab:F4a3}
{\tiny
\begin{tabular}{@{\extracolsep{-4pt}}c|l|c|r}
$\varepsilon$ & \mbox{ Set } & \mbox{double cell size} & \mbox{partition into left cell} 
\\\hline
&&& \\
6 & $21\langle S_4\rangle +11\langle S_3\rangle +3\langle D_8\rangle + \langle C_2\rangle +6\langle K_1\rangle $ & 7408 & $(151,153^{11},179^{21},193^3,206^6)$ \\
6 & $21\langle S_4'\rangle +11\langle S_3\rangle +3\langle D_8'\rangle + \langle C_2\rangle +6\langle K_1'\rangle $ & 7408 & $(151,153^{11},179^{21},193^3,206^6)$ \\
7 & $22\langle S_4\rangle +10\langle S_3\rangle +2\langle D_8\rangle + \langle C_2\rangle +7\langle K_1\rangle $ & 7490 & $(151,153^{10},180^{22},193^2,209^7)$ \\
7 & $22\langle S_4'\rangle +10\langle S_3\rangle +2\langle D_8'\rangle + \langle C_2\rangle +7\langle K_1'\rangle $ & 7490 & $(151,153^{10},180^{22},193^2,209^7)$ \\
8 & $23\langle S_4\rangle +9\langle S_3\rangle +\langle D_8\rangle + \langle C_2\rangle +8\langle K_1\rangle $ & 7580 & $(151,153^{9},181^{23},193,212^8)$ \\
8 & $23\langle S_4'\rangle +9\langle S_3\rangle +\langle D_8'\rangle + \langle C_2\rangle +8\langle K_1'\rangle $ & 7580 & $(151,153^{9},181^{23},193,212^8)$ \\
9 & $24\langle S_4\rangle +8\langle S_3\rangle +\langle C_2\rangle +9\langle K_1\rangle $ & 7678 & $(151,153^{8},182^{24},215^9)$ \\
9 & $24\langle S_4'\rangle +8\langle S_3\rangle +\langle C_2\rangle +9\langle K_1'\rangle $ & 7678 & $(151,153^{8},182^{24},215^9)$ \\
&&& \\
\\\hline
&&& \\
8 & $22\langle S_4\rangle +\langle S_4'\rangle +9\langle S_3\rangle +\langle D_8\rangle + \langle C_2\rangle +8\langle K_1\rangle $ & 7438 & $(110,151,153^{9},179^{22},190,209^8)$ \\
8 & $22\langle S_4'\rangle +\langle S_4\rangle +9\langle S_3\rangle +\langle D_8'\rangle + \langle C_2\rangle +8\langle K_1'\rangle $ & 7438 & $(110,151,153^{9},179^{22},190,209^8)$ \\
9 & $24\langle S_4\rangle +8\langle S_3\rangle +\langle C_2\rangle +8\langle K_1\rangle +\langle K_1'\rangle $ & 7438 & $(95,151,153^{8},179^{24},209^8)$ \\
9 & $24\langle S_4'\rangle +8\langle S_3\rangle +\langle C_2\rangle +8\langle K_1'\rangle +\langle K_1\rangle $ & 7438 & $(95,151,153^{8},179^{24},209^8)$ \\
9 & $23\langle S_4\rangle +\langle S_4'\rangle +8\langle S_3\rangle +\langle C_2\rangle +9\langle K_1\rangle $ & 7532 & $(109,151,153^{8},180^{23},212^9)$ \\
9 & $23\langle S_4'\rangle +\langle S_4\rangle +8\langle S_3\rangle +\langle C_2\rangle +9\langle K_1'\rangle $ & 7532 & $(109,151,153^{8},180^{23},212^9)$ \\
\end{tabular}
}
\end{table}

\bigskip

\begin{table}[ht]
\caption{$S_4$-sets from parabolic blocks of $U_\chi$ with $\chi$ of type $F_4 (a_3)$.} 
\label{tab:F4}
{\tiny
\begin{tabular}{@{\extracolsep{-4pt}}c|ccccc|l}
& $\Sigma_4$ & $\Sigma_{3,1}$ & $\Sigma_{2,2}$ & $\Sigma_{2,1,1}$ & $\Sigma_{1,1,1,1}$ & \\\hline
                                      
$W(1,2,3,4)$ & 1 & 0 & 0 & 0 & 0 & $\langle S_4\rangle $\\
$W(1,2,3)$ & 3 & 2 & 0 & 0 & 0 & $\langle S_4\rangle +2\langle S_3\rangle $\\
$W(1,2,4)$ & 7 & 4 & 0 & 0 & 0 & $3\langle S_4\rangle +4\langle S_3\rangle $\\
$W(2,3,4)$ & 3 & 0 & 0 & 0 & 0 & $3\langle S_4\rangle $\\ 
$W(1,3,4)$ & 7 & 1 & 0 & 0 & 0 &  $6\langle S_4\rangle +\langle S_3\rangle $ \\ \hline

$W(1,2)$ & 11 & 9 & 1 & 1 & 0 & $3\langle S_4\rangle +7\langle S_3\rangle + \langle C_2\rangle  $ \\
$W(3,4)$ & 11 & 1 & 1 & 0 & 0 & $9\langle S_4\rangle +\langle S_3\rangle +\langle D_8\rangle $ \\ \hline

$W(1,3)$ & 15 & 6 & 2 & 0 & 0 &  $(7+\alpha)\langle S_4\rangle +(6-\alpha)\langle S_3\rangle +(2-\alpha)\langle D_8\rangle +\alpha \langle K_1\rangle , \ \alpha\leq 2$ \\
$W(2,3)$ & 10 & 4 & 2 & 0 & 0 & $(4+\beta)\langle S_4\rangle +(4-\beta)\langle S_3\rangle +(2-\beta)\langle D_8\rangle +\beta \langle K_1\rangle , \ \beta\leq 2 $ \\
$W(1)$ & 25 & 14 & 5 & 1 & 0 & $(8+\gamma)\langle S_4\rangle +(12-\gamma)\langle S_3\rangle +(4-\gamma)\langle D_8\rangle + \langle C_2\rangle +\gamma \langle K_1\rangle , \ \max(\alpha,\beta)\leq\gamma\leq 2$ \\
$W(3)$ & 25 & 8 & 5 & 0 & 0 & $(12+\delta)\langle S_4\rangle +(8-\delta)\langle S_3\rangle +(5-\delta)\langle D_8\rangle +\delta \langle K_1\rangle , \ \max(\alpha,\beta)\leq\delta\leq 4$ \\
$W(\emptyset)$ & 42 & 19 & 10 & 1 & 0 &  $(15+\varepsilon)\langle S_4\rangle +(17-\varepsilon)\langle S_3\rangle +(9-\varepsilon)\langle D_8\rangle + \langle C_2\rangle +\varepsilon \langle K_1\rangle , \ \max(\gamma,\delta)\leq \varepsilon \leq 8$ \\
\end{tabular}
}
\end{table}


\begin{thebibliography}{9999Bezrukavnikov}
%\bibitem{Benson91} D. Benson, \textit{Representations and Cohomology I}, Cambridge University Press, Cambridge, UK, 1991.
\bibitem{BezrukFO06} R. Bezrukavnikov, M. Finkelberg, V. Ostrik,
On tensor categories attached to affine Weyl groups III, 
{\em Israel J. Math.},  170  (2009), 207-234. 
\bibitem{BMR} R. Bezrukavnikov, I. Mirkovi\'c, D. Rumynin,
Singular localization and intertwining functors for reductive Lie algebras in prime characteristic, 
{\em Nagoya Math. J.}, 184 (2006), 1-55. 
\bibitem{Bezruk01} R. Bezrukavnikov, V. Ostrik, 
On tensor categories attached to affine Weyl groups II, 101-119, 
in T. Shoji et al. editor, 
\textit{Representation theory of algebraic groups and quantum groups}, 
Adv. Stud. Pure Math., 40, Math. Soc. Japan, Tokyo, 2004.
\bibitem{Gun} P. Gunnells, 
Automata and cells in affine Weyl groups,
{\em Represent. Theory}, 14 (2010), 627–644. 
\bibitem{CoMo} 
M. Cohen; S. Montgomery,
Group-graded rings, smash products, and group actions,
{\em Trans. Amer. Math. Soc.}, 282 (1984), 237-258. 
%\bibitem{CartanEilenberg56} H. Catran and S. Eilenberg. \textit{Homological algebra}. Princeton University Press, Princeton, USA, 1956.
%\bibitem{CurtisReiner87} C. Curtis and I. Reiner. \textit{Methods of representation theory: volume II}. John Wiley and sons, New York, NY, 1987.
\bibitem{DiLi} A. D\'{i}az, A. Libman, 
The Burnside ring of fusion systems,  
{\em  Adv. Math.},  222  (2009), 1943-1963.
\bibitem{DoDo} T. Dokchitser, V. Dokchitser, 
Regulator constants and the parity conjecture, 
{\em  Invent. Math.},  178  (2009),  23-71.
\bibitem{EGNO} P. Etingof, S. Gelaki, D. Nikshych, V. Ostrik, 
{\em Tensor Categories},
Lecture Notes, available at http://www-math.mit.edu/~etingof/tenscat1.pdf
\bibitem{GoRo} S. Goodwin, G. R\"{o}hrle, 
Rational points on generalized flag varieties and unipotent conjugacy in finite groups of Lie type, 
{\em Trans. Amer. Math. Soc.}, 361 (2009), 177-206. 
\bibitem{HartmannYalcin07} R.Hartmann, E. Yal\c{c}in,  
Generalized Burnside rings and group cohomology,  
\textit{Journal of Algebra}, 310 (2007), 917-944.
\bibitem{Jan} J. Jantzen, 
Modular representations of reductive Lie algebras,
{\em J. Pure Appl. Algebra},  152 (2000), 133-185. 
\bibitem{Karpilovsky87} G. Karpilovsky, 
\textit{The Schur multiplier}, 
Oxford University Press, Oxford, UK, 1987.
\bibitem{Lus} G. Lusztig, 
Cells in affine Weyl groups IV,  
{\em J. Fac. Sci. Univ. Tokyo Sect. IA Math.}  36  (1989), 297-328.
\bibitem{Lus2} G. Lusztig, 
Bases in equivariant $K$-theory II, 
{\em Represent. Theory},  3 (1999), 281-353. 
\bibitem{MirkovicRumynin99} I. Mirkovi\'c, D. Rumynin, 
Geometric representation theory of restricted Lie algebras of classical type, 
\textit{Transformation groups}, 6 (2001), 175-191.
\bibitem{Ostrik03} V. Ostrik, 
Module categories, weak Hopf algebras and modular invariants, 
{\em Transform. Groups}, 8 (2003), 177–206. 
%\bibitem{Ostrik06} V. Ostrik, Module categories over the Drinfeld double of a finite group, {\em IMRN}, 27(2003), 1507-1520.
\bibitem{Sho} T. Shoji, 
On the Green polynomials of a Chevalley group of type $F_4$, 
{\em Communications in Algebra}, 10 (1982), 505-543.

%\bibitem{}
\end{thebibliography}
\end{document}